\newtheorem{theorem}{Theorem}[section]
\newtheorem{lemma}[theorem]{Lemma}
\newtheorem{corollary}[theorem]{Corollary}
\newtheorem{example}[]{Example}[section]
\newtheorem{definition}[theorem]{Definition}
\newtheorem{remark}[theorem]{Remark}
\newcommand*\E{\mathop{}\!\mathbb{E}}
\newcommand{\Exp}{\mathds{E}}
\newcommand{\ii}{\mbox{i}}
\newcommand{\Prob}{\mathop{}\!\mathbb{P}}
\newcommand{\vect}[1]{\vec{#1}}
\renewcommand{\vect}[1]{\boldsymbol{#1}}
\newcommand{\mat}[1]{\boldsymbol{#1}}
\newcommand*\TT{\mathop{}\!\bm{T}}
\newcommand*\T{\mathop{}\!\bm{t}}
\newcommand*\e{\mathop{}\!\bm{e}}
\newcommand*\p{\mathop{}\!\bm{\pi}}
\newcommand*\dd{\mathop{}\!\mathrm{d}}
\author[H. \smash{Albrecher}]{Hansj\"org Albrecher}
\address[Hansj\"org Albrecher]{ Department of Actuarial Science, Faculty of Business and Economics and Swiss Finance Institute, University of Lausanne, CH-1015 Lausanne, Switzerland}
\email{{hansjoerg.albrecher@unil.ch}}
\author[M. \smash{Bladt}]{Martin Bladt}
\address[Martin Bladt]{Department of Actuarial Science, Faculty of Business and Economics, University of Lausanne, CH-1015 Lausanne, Switzerland}
\email{{martin.bladt@unil.ch}}
\author[M. \smash{Bladt}]{Mogens Bladt}
 \address[Mogens \smash{Bladt}]{Department of Mathematical Sciences, University of Copenhagen, Universitetsparken 5, DK-2100 Copenhagen \O, Denmark}
\email{bladt@math.ku.dk}
\title{Matrix Mittag--Leffler distributions and modeling heavy-tailed risks}
\begin{document}

\begin{abstract}
In this paper we define the class of matrix Mittag-Leffler distributions and study some of its properties. We show that it can be interpreted as a particular case of an inhomogeneous phase-type distribution with random scaling factor, and alternatively also as the absorption time of a semi-Markov process with Mittag-Leffler distributed interarrival times. We then identify this class and its power transforms as a remarkably parsimonious and versatile family for the modelling of heavy-tailed risks, which overcomes some disadvantages of other approaches like the problem of threshold selection in extreme value theory. We illustrate this point both on simulated data as well as on a set of real-life MTPL insurance data that were modeled differently in the past.
\end{abstract} 
\maketitle

\section{Introduction}
The modeling of heavy-tailed risks is a classical topic in probability, statistics and its applications to understand and interpret data, see e.g.\ Embrechts et al.\ \cite{ekt}, Beirlant et al.\ \cite{beirlant2006} and Klugman et al.\ \cite{klugman2012loss}. The folklore heavy-tailed distributions like Pareto, (heavy-tailed) Weibull and lognormal distributions can often serve as very useful benchmark models, particularly when only a few data points are available. In addition, the Pareto distribution is simple to work with and has an intuitive justification in terms of limit properties of extremes. However, in situations with more (but not an abundance of) data points, one often empirically observes that a simple Pareto distribution does not serve as a good model across the entire range of the distribution. This is also the case for more general parametric families like Burr or Benktander distributions. Traditionally, and according to a main paradigm of the extreme value statistics approach, this is handled by only using the largest available data points to estimate the tail behavior, and model the bulk of the distribution separately by another distribution, finally splicing together the respective parts (see e.g.\ Albrecher et al.\ \cite[Ch.4]{abt} for details). In insurance practice one often refers to this separate modeling as the modeling of \textit{attritional} and \textit{large} claims, and the resulting models are frequently referred to as \textit{composite models} \cite{pig}. A natural problem in this context is how to choose the threshold between the separate regions, often boiling down to the compromise of not leaving too few data points for the tail modeling. At the same time, the consequences  of that choice can be considerable, for instance for the determination of solvency capital requirements in insurance (cf. \cite[Ch.6]{abt} for illustrations). A considerable effort has therefore been made to develop techniques and criteria for an appropriate choice of such thresholds, see e.g.\ \cite{beirlant2006} for an overview and \cite{abbtrim} for a recent contribution in that direction. \\
If the confidence in the relevance of available data points for the description of the (future) risk is sufficiently high, another possible approach is to use a tractable, but much larger family of distributions and identify a good fit. A particularly popular candidate for such an approach is the class of phase-type (PH) distributions, see e.g.\ Asmussen et al.\ \cite{asmner}. The class of PH distributions is dense (in the sense of weak convergence) in the class of distributions on the positive real line, meaning that they can approximate any positive distribution arbitrarily well. They are, however, light-tailed, which may be a problem in applications which require a heavier tail and where the quantities of interest heavily depend on the tail behavior (as e.g.\ for ruin probabilities, cf. \cite{AsAl10}). Fitting heavy-tailed distributions with a PH distribution can then lead to requiring many phases (rendering its use computationally cumbersome), and the resulting model will still not capture the tail behavior in a satisfactory manner. Two approaches to remedy this problem are Bladt \& Rojas-Nandayapa \cite{BladtNandayapa2018} and 
Bladt et al.\ \cite{bladt-nielsen-samorodnitsky:2015}. 
In Albrecher \& Bladt \cite{ab18inh} recently another direction was suggested, namely to transform time in the construction of PH distributions (as absorption times of Markov jump processes), leading to inhomogeneous phase-type (IPH) distributions. For suitable transformations, this approach allows to transport the versatility of PH distributions into the domain of heavy-tailed distributions, by  introducing dense classes of genuinely heavy-tailed distributions. As a by-product, it was shown in \cite{ab18inh} that a class of matrix-Pareto distributions can be identified, where the scalar parameter of a classical Pareto distribution is replaced by a matrix, providing an intuitive and somewhat natural extension of the Pareto distribution, much as the matrix-exponential distribution, which is a powerful extension of the classical exponential distribution, see e.g.\ Bladt \& Nielsen \cite{bladt2017matrix}. \\

In this paper we establish another matrix version of a distribution, namely the Mittag-Leffler distribution (first studied by Pillai \cite{pillai}). While the identification of matrix versions of distributions is of mathematical interest in its own right, we will show that the resulting matrix Mittag-Leffler distributions (and its power transforms) have favorable properties for the modeling of heavy tails, and it can outperform some other modeling approaches in a remarkable way. {  Furthermore, we will identify this class of distributions as a particular extension of the IPH class, where the scaling is random. In addition, we will establish the matrix Mittag-Leffler distribution as the absorption time of a semi-Markov process with (scalar) Mittag-Leffler distributed inter-arrival times, extending the role of the exponential distribution for the inter-arrival in continuous-time Markov chains. }\\

The Mittag-Leffler function was first introduced in \cite{ml1904} and over the years turned out to be a crucial object in fractional calculus. It can be seen as playing the same role for fractional differential equations as the exponential function does for ordinary differential equations, see e.g.\ Gorenflo et al.\ \cite{gorenflo2014mittag} for a recent  overview. A recent application of fractional calculus for a particular risk model in insurance can be found in Constantinescu et al.\ \cite{const}. Mittag-Leffler functions with matrix argument were first introduced in Chikrii and Eidel’man \cite{chikrii2000generalized} and play a prominent role for identifying solutions of systems of fractional differential equations, see e.g.\ Garrappa and Popolizio \cite{Garrappa2018}. Here we will use them to define the class of matrix Mittag-Leffler (MML) distributions, which enjoy some attractive mathematical properties and are heavy-tailed with a regularly varying tail with index $\alpha<1$. While such extremely heavy tails with resulting infinite mean can be relevant in the modeling of operational risk \cite{chav06} and possibly insurance losses due to natural catastrophes \cite{abt}, in most applications of interest the tails are slightly less heavy. We therefore enlarge the class of matrix Mittag-Leffler distributions by also including its power transforms and estimate the corresponding power together with the other parameters from the data in the fitting procedure. The index of regular variation of this larger class of distributions can now be any positive number. Whereas the number of needed phases for a PH or IPH fit can be very large also due to multi--modality or other  irregularities in the shape of the main body of the distribution, we will see that the class of matrix Mittag-Leffler distribution and its power transforms (which we call \textit{power matrix Mittag-Leffler} (PMML) \textit{distributions}) offers a significant reduction in the number of phases needed to obtaining adequate fits. For this reason, it can even be worthwhile to scale light-tailed data points to heavy-tailed ones first, then apply a matrix Mittag-Leffler fit to the latter and transform the fit back to the original light-tailed scaling. This procedure is to some extent the reverse direction of the philosophy that underlied the PH fitting of heavy-tailed distributions. We will illustrate the potential advantage of this alternative approach in the numerical section at the end of the paper. \\

The remainder of the paper is organized as follows. Section \ref{sec2} recollects some useful definitions and properties of Mittag-Leffler functions, Mittag-Leffler distributions and PH distributions. Section \ref{sec3} then defines matrix Mittag-Leffler distributions and derives a number of its properties. We also give three explicit examples. Section \ref{sec4} establishes MML distributions as IPH distributions under a particular random scaling, which allows to intuitively understand the additional flexibility gained from using MML distributions for the modeling of heavy-tailed risks. {  We then also establish MML distributions as the absorption times of a semi-Markov process with ML distributed interarrival times, which is yet another perspective on the potential of the MML class as a modeling tool.} Finally, Section \ref{secillu} is devoted to the modeling of data using (power-transformed) MML distributions. We first illustrate the convincing performance of the numerical fitting procedure to something as involved as tri--modal data. Secondly, we consider an MTPL data set taken from \cite{abt} and already studied by various other means in the literature. We show that a plain maximum-likelihood fit to this data set gives a convincing fit to the entire range of the data with remarkably few parameters, and even identifies the tail index with striking accuracy when compared to recent extreme value techniques as in \cite{abbtrim}, without having to choose a threshold for the tail modeling at all. We then also provide an example where transforming light-tailed data into heavy-tailed ones, fitting with a PMML distribution and transforming back can lead to a much better fit for the same number of parameters than a classical phase-type  distribution. We then also discuss the signature of MML distributions in the tail in terms of the behavior of the Hill plot, which allows to develop an intuitive guess as to when MML distributions are particularly adequate for a fitting procedure of heavy tails. Finally, Section \ref{secconcl} concludes.

\section{Some relevant background}\label{sec2}
\subsection{Mittag--Leffler functions}
The  Mittag--Leffler (ML) function is defined by
\[ E_{\alpha, \beta}(z)=\sum_{k=0}^{\infty} \frac{z^{k}}{\Gamma(\alpha k+\beta)} ,\quad z\in\mathbb{C}, \]
where $\beta\in \mathbb{R}$  and $\alpha>0$. The ML function is an entire function if $\beta>0$, and it satisfies (see e.g.\ Erdelyi et al.\  \cite[p.210]{bateman1953higher}) 
\begin{equation*}
 \frac{\dd^m}{\dd z^m}\left[z^{\beta-1} E_{a, \beta}\left(z^{a}\right)\right]=z^{\beta-m-1} E_{\alpha, \beta-m}\left(z^{\alpha}\right) .
 \end{equation*} 
 This implies that (see \cite[Prop.2]{Garrappa2018})
\begin{equation}
  E_{\alpha, \beta}^
  {(k)}(z)=\frac{\dd^{k}}{\dd z^{k}} E_{\alpha, \beta}(z)=\frac{1}{\alpha^{k} z^{k}} \sum_{j=0}^{k} c_{j}^{(k)} E_{\alpha, \beta-j}(z) \label{eq:m-order-der-ML}
   \end{equation}
where
\[ c_{j}^{(k)}=\left\{\begin{array}{ll}{(1-\beta-\alpha(k-1)) c_{0}^{(k-1)}}, & {j=0}, \\
c_{j-1}^{(k-1)}+(1-\beta-\alpha(k-1)+j) c_{j}^{(k-1)}, & j=1, \ldots, k-1, \\ {1}, & {j=k} .
\end{array} \right.  \]
For a matrix $\vect{A}$, we may define its ML function as
\begin{align*}
E_{\alpha, \beta}(\vect{A})=\sum_{k=0}^{\infty} \frac{\vect{A}^{k}}{\Gamma(\alpha k+\beta)}.
\end{align*}
If $\beta>0$, one can then express the entire ML function of a matrix $\mat{A}$ by Cauchy's formula
\[ E_{\alpha,\beta}(\mat{A}) =\frac{1}{2\pi \ii}\int_\gamma E_{\alpha,\beta}(z)(z\mat{I}-\mat{A})^{-1}\dd z , \]
where $\gamma$ is a simple path enclosing the eigenvalues of $\mat{A}$. If $\mat{A}$ has a Jordan normal form $\mat{A}=\mat{P}\,\mbox{diag}(\mat{J}_1,...,\mat{J}_r)\,\mat{P}^{-1}$ with 
\[  \mat{J}_i = 
\begin{pmatrix}
  \lambda_i & 1 & 0 & \cdots & 0 \\
  0 & \lambda_i & 1 & \cdots & 0 \\
  0 & 0 & \lambda_i & \cdots & 0 \\
  \vdots & \vdots & \vdots & \vdots\vdots\vdots & \vdots \\
  0 & 0 & 0 & \cdots & \lambda_i
\end{pmatrix} ,
  \]
  then we may equivalently express $ E_{\alpha,\beta}(\mat{A})$ by
  \[   E_{\alpha,\beta}(\mat{A})=\mat{P}\, \mbox{diag}(E_{\alpha,\beta}(\mat{J}_1),...,E_{\alpha,\beta}(\mat{J}_r)) \,\mat{P}^{-1},  \]
  where
  \[  E_{\alpha,\beta}(\mat{J}_i) =  
\begin{pmatrix}
E_{\alpha,\beta}(\lambda_i) & E_{\alpha,\beta}^{(1)}(\lambda_i) & \frac{E_{\alpha,\beta}^{(2)}(\lambda_i)}{2!} & \cdots & \frac{E_{\alpha,\beta}^{(m_i-1)}(\lambda_i)}{(m_i-1)!} \\
0 & E_{\alpha,\beta}(\lambda_i) & E_{\alpha,\beta}^{(1)}(\lambda_i) & \cdots & \frac{E_{\alpha,\beta}^{(m_i-2)}(\lambda_i)}{(m_i-2)!} \\
0 & 0 & E_{\alpha,\beta}(\lambda_i) & \cdots & \frac{E_{\alpha,\beta}^{(m_i-3)}(\lambda_i)}{(m_i-3)!} \\
\vdots & \vdots & \cdots & \vdots\vdots\vdots & \vdots \\
0 & 0 & 0 & \cdots & E_{\alpha,\beta}(\lambda_i)
\end{pmatrix},
   \]
   and $m_i$ is the dimension of $\mat{J}_i$. 

   In either case, we shall need to evaluate the derivatives of ML functions at the eigenvalues, which by \eqref{eq:m-order-der-ML} implies the evaluation of ML functions with possibly negative indices  $\beta-j$. This is not a problem, but it is important that initially $\beta>0$ to ensure that the ML function is entire and thereby the existence of the Cauchy integral formula is guaranteed.  

For further properties on the ML function we refer e.g.\ to  \cite{bateman1953higher}, \cite{Garrappa2018}, \cite{Matychyn2018} and \cite{haubold}.

\subsection{Mittag--Leffler distributions}
A random variable having Mittag-Leffler (ML) distribution was defined in Pillai \cite{pillai} through the cumulative distribution function and consequently density given by
\begin{align*}
F_{\delta,\alpha}(x)&=1-E_{\alpha,1}(-(x/\delta)^{\alpha}),\quad x>0,\:\:0<\alpha\le 1,\\
f_{\delta,\alpha}(x)&
=\frac{x^{\alpha-1}}{\delta^\alpha}E_{\alpha,\alpha}(-(x/\delta)^\alpha),\quad x>0,\:\:0<\alpha\le 1,
\end{align*}
with Laplace transform
\begin{align}\label{ltrafo}
\frac{1}{1+(\delta u)^{\alpha}}.
\end{align}
A convenient representation, due to Kozubowski \cite{kozu01}, for a ML random variable $X$ is 
\begin{align*}
X\stackrel{d}{=}\delta Z R^{1/\alpha},
\end{align*}
where $Z$ is standard exponential and $R$ has cumulative distribution function
\begin{align*}
F_R(x)=\frac{2}{\pi \alpha}\left[\arctan\left(\frac{x}{\sin(\alpha\pi/2)}+\cot(\alpha\pi/2)\right)-\frac \pi 2 \right]+1.
\end{align*}
The tail behaviour of $R$ is equivalent to that of a Cauchy random variable, and hence $X$ is regularly varying with parameter $\alpha$ in the tail (see e.g.\ \cite[Prop.1.3.9]{mikosch1999regular}).

The following extension (for the case $\delta=1$) will also play a role in the sequel: a random variable $X$ is said to follow a generalized Mittag-Leffler (GML) distribution with parameters $\alpha$ $(0<\alpha\le 1)$ and $\beta>0$, if its Laplace transform is given by 
\[{\mathbb E}(e^{-uX})=(1+u^{\alpha})^{-\beta}.\]
The corresponding cumulative distribution function then is 
\begin{align*}
F_{\alpha,\beta}(x)&=\sum_{k=0}^\infty\frac{(-1)^k\Gamma(k+\beta)x^{\alpha(k+\beta)}}{\Gamma(\beta)\,k!\,\Gamma(1+\alpha(k+\beta))}=\sum_{k=0}^\infty\frac{(-1)^k x^{\alpha(k+\beta)}}{\mathfrak{B}(\beta,k)\,k\,\Gamma(1+\alpha(k+\beta))},
\end{align*}
where $\mathfrak{B}(x,y)$ is the Beta function (see Jose et al.\ \cite{jose2010generalized}). 
The analogous representation for a GML variable $X$ is
\begin{align}\label{prodgml}
X\stackrel{d}{=} W^{1/\alpha}S_\alpha,
\end{align}
where $W$ is Gamma with scale parameter 1 and shape parameter $\beta$, and $S_\alpha$ is a random variable with Laplace transform given by
\begin{align*}
{\mathbb E}(e^{-uS_\alpha})=\exp(-u^{\alpha}).
\end{align*}

{  \subsection{Phase--type distributions}\label{subsec:PH}
A random variable $\tau$ is said to be phase--type distributed with generator (or representation) $(\vect{\pi},\mat{T})$, and we write 
$\tau\sim \mbox{PH}(\vect{\pi},\mat{T})$, if it is the time until absorption of a (time--homogeneous) Markov jump process $\{ X_t\}_{t\geq 0}$
with state--space $E=\{1,2,...,p,p+1\}$ where states $1,...,p$ are transient and state $p+1$ is absorbing. The row vector $\vect{\pi}=(\pi_1,...,\pi_p)$ is the initial distribution, $\pi_i = \Prob (X_0=i)$, and $\mat{T}=\{ t_{ij}\}_{i,j=1,...,p}$ where $t_{ij}$ denotes the transition rates of jumps between transient states $i$ and $j$. We assume that $\pi_1+\cdots \pi_p+1$, i.e. $X_0$ cannot start in the absorbing state which would have caused an atom at zero. The intensity matrix for $\{ X_t\}_{t\geq 0}$ can be written as
\[ \mat{\Lambda} = \begin{pmatrix}
\mat{T} & \vect{t} \\
\vect{0} & 0 
\end{pmatrix} , \]
where $\vect{t}=(t_1,...,t_p)^\prime$ is a column vector of exit rates, i.e. $t_i$ is the rate of transition from state $i$ to the absorbing state $p+1$. We notice that $-\mat{T}\vect{e}=\mat{t}$, where $\vect{e}=(1,1,...,1)^\prime$ is the $p$--dimensional column vector of ones, since the row sums in $\mat{\Lambda}$ must all be zero. Hence $(\vect{\pi},\mat{T})$ fully parametrises the Markov process.  

The class of Phase--type distributions is dense in the class of distributions on the positive reals, meaning that they may approximate any positive distribution arbitrarily well. On the other hand, they constitute a class of probabilistically tractable distributions, which often allows for exact solutions to complex stochastic problems, and frequently in a closed form.
The theory is well developed with numerous applications in insurance risk and queueing theory (see e.g. \cite{bladt2017matrix} and reference therein). Phase--type distributions are light-tailed (i.e., their tail has an exponential decay), which makes them inadequate for modelling certain phenomena like insurance risks with heavy-tailed claims. 
 Recently, Albrecher \& Bladt \cite{ab18inh} proposed an extension of the PH construction principle to time--inhomogeneous Markov processes, in which case the absorption times can also be heavy-tailed with a wide spectrum of possible tail shapes. 

If $\tau\sim \mbox{PH}(\vect{\pi},\mat{T})$, then its density function is given by $f_\tau (x)=\vect{\pi}e^{\mat{T}x}\vect{t}$, its distribution function by $F_\tau (x)=1-\vect{\pi}e^{\mat{T}x}\vect{e}$ and its Laplace transform by $L_\tau (s) = \vect{\pi}(s\mat{I}-\mat{T})^{-1}\vect{t}$, where $\mat{I}$ denotes the identity matrix. The (fractional) moments are $\Exp (\tau^\alpha)=\Gamma (\alpha+1) \vect{\pi}(-\mat{T})^{-\alpha}\vect{e}$. For further details on Phase--type distributions we refer to \cite{bladt2017matrix}.

}


\section{Matrix Mittag--Leffler distributions}\label{sec3}
Let us now derive a matrix version of the Mittag-Leffler distribution by defining its Laplace transform and identifying the distribution associated to it. To this end, in view of  \eqref{ltrafo} consider the function
\begin{equation}
  \phi (u)=\vect{\pi}(u^\alpha \mat{I}- \mat{T})^{-1}\vect{t},  \quad 0<\alpha\le 1, \label{eq:LPT-ML-PH}
  \end{equation}
where $(\vect{\pi},\mat{T})$ is a PH generator. 
\begin{theorem}\label{th:LPT-MML}
$\phi (u)$ is the Laplace transform of a probability distribution.
\end{theorem}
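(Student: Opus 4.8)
The plan is to exhibit $\phi(u)$ explicitly as the Laplace transform of a random variable built from known ingredients, rather than to verify the defining analytic properties of a Laplace transform (complete monotonicity, value $1$ at $u=0$, etc.) directly. First I would observe that for $\alpha=1$ the function $\phi(u)=\vect{\pi}(u\mat{I}-\mat{T})^{-1}\vect{t}$ is exactly the Laplace transform of a $\mathrm{PH}(\vect{\pi},\mat{T})$ distribution, as recalled in Section~\ref{subsec:PH}; so the claim is a genuine generalization of that fact and one expects the scalar Mittag--Leffler recipe of Section~\ref{sec2} to carry over verbatim in matrix form. The natural guess is therefore to replace the exponential clock of the phase-type construction by the subordinator-type time change underlying the scalar Mittag--Leffler law: if $\tau\sim\mathrm{PH}(\vect{\pi},\mat{T})$ and $S_\alpha$ is the one-sided $\alpha$-stable variable with $\Exp(e^{-uS_\alpha})=\exp(-u^\alpha)$ appearing in \eqref{prodgml}, then I would conjecture that $X \stackrel{d}{=} \tau^{1/\alpha} S_\alpha$ (equivalently, a phase-type random variable evaluated at an independent stable-subordinated time) has Laplace transform $\phi(u)$.

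The key computation is then a conditioning argument. Conditioning on $\tau$, one has $\Exp(e^{-uX}\mid \tau) = \Exp(e^{-u\tau^{1/\alpha}S_\alpha}\mid\tau) = \exp(-u^\alpha \tau)$, using the stable Laplace transform with argument $u\tau^{1/\alpha}$. Taking expectations over $\tau$ gives $\Exp(e^{-uX}) = \Exp(e^{-u^\alpha \tau}) = L_\tau(u^\alpha)$, and since $L_\tau(s) = \vect{\pi}(s\mat{I}-\mat{T})^{-1}\vect{t}$ for a phase-type $\tau$, this is precisely $\vect{\pi}(u^\alpha\mat{I}-\mat{T})^{-1}\vect{t} = \phi(u)$. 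Because $X$ is manifestly a nonnegative random variable (a product of nonnegative variables), $\phi$ is automatically the Laplace transform of a probability distribution on $[0,\infty)$, which is the assertion. One should also check the harmless endpoint $\phi(0)=\vect{\pi}(-\mat{T})^{-1}\vect{t}=\vect{\pi}(-\mat{T})^{-1}(-\mat{T})\vect{e}=\vect{\pi}\vect{e}=1$, consistent with a proper distribution and with the construction.

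The main obstacle is not the conditioning identity, which is a one-line calculation, but making sure the interchange and the matrix resolvent are legitimate: one must verify that $u^\alpha$ stays off the spectrum of $\mat{T}$ so that $(u^\alpha\mat{I}-\mat{T})^{-1}$ exists for all $u>0$ (this holds because $\mat{T}$, being a sub-intensity matrix of a transient phase-type generator, has eigenvalues with strictly negative real parts, while $u^\alpha\ge 0$), and that $L_\tau(s)=\vect{\pi}(s\mat{I}-\mat{T})^{-1}\vect{t}$ indeed extends to all $s\ge 0$, not merely to a right neighborhood of $0$ — again guaranteed by the spectral bound. A secondary, purely expository, point is that the argument as written presumes the product representation; if one prefers a self-contained proof one can instead appeal to the scalar identity $(1+(\delta u)^\alpha)^{-1} = \int_0^\infty e^{-u^\alpha t}\,\delta^{-\alpha}e^{-t/\delta^\alpha}\,dt$ (the $\alpha=1$ Mittag--Leffler Laplace transform evaluated through its exponential mixing measure) and mimic it entrywise via the spectral/Cauchy representation of $E_{\alpha,1}$ recalled in Section~\ref{sec2}, but the subordination route is cleaner and also foreshadows the IPH and semi-Markov interpretations promised in the introduction.
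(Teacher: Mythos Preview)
Your argument is correct: conditioning on $\tau\sim\mbox{PH}(\vect{\pi},\mat{T})$ and using $\Exp(e^{-uS_\alpha})=\exp(-u^\alpha)$ yields $\Exp(e^{-u\,\tau^{1/\alpha}S_\alpha})=\Exp(e^{-u^\alpha\tau})=\vect{\pi}(u^\alpha\mat{I}-\mat{T})^{-1}\vect{t}$, so $\phi$ is indeed the Laplace transform of the nonnegative random variable $\tau^{1/\alpha}S_\alpha$. This is, however, a genuinely different route from the paper's own proof of the theorem. The paper argues analytically via Bernstein's theorem: writing $\phi(u)=f(g(u))$ with $f(x)=\vect{\pi}(x\mat{I}-\mat{T})^{-1}\vect{t}$ and $g(u)=u^\alpha$, it applies Fa\`a di Bruno's formula and tracks the signs of $f^{(m)}(g(u))$ (using that $(u^\alpha\mat{I}-\mat{T})^{-1}$ is a nonnegative Green matrix) and of $g^{(j)}(u)$ to conclude that $\mbox{sign}(\phi^{(n)}(u))=(-1)^n$, i.e.\ $\phi$ is completely monotone with $\phi(0)=1$. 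Your constructive route is shorter and more transparent, and in fact coincides with what the paper later records separately as Theorem~\ref{repPHstable}; its drawback is that it is tied to the specific choice $g(u)=u^\alpha$ through the stable law $S_\alpha$. The paper's analytic proof, by contrast, immediately yields the generalization noted in the remark following the theorem: for any $g$ with $g(0)=0$ and derivatives of alternating sign, $\vect{\pi}(g(u)\mat{I}-\mat{T})^{-1}\vect{t}$ is again a Laplace transform, without having to identify a corresponding subordinator.
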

\begin{proof}
Let $g(u)=u^\alpha$ and
\[  f(x)=\vect{\pi}(x\mat{I}-\mat{T})^{-1}\vect{t} . \]
Then $\phi (u)=f(g(u))$. Now $\mat{T}-u^\alpha\mat{I}$ is a sub--intensity matrix 
for all $u\geq 0$ and therefore $(u^\alpha\mat{I}-\mat{T})^{-1}$ is a non--negative matrix (Green matrix, see
\cite[p.134]{bladt2017matrix}). Thus 
\[  f^{(n)}(g(u))=(-1)^n n! \vect{\pi}\left( u^\alpha \mat{I}-\mat{T} \right)^{-n-1}\vect{t},   \]
which has sign $(-1)^n$. Concerning $g$,
\[  g^{(j)}(u) = \alpha (\alpha-1)\cdots (\alpha-j+1)u^{\alpha -j} \]
has sign $(-1)^{j+1}$.
We shall employ Fa\'a di Bruno's formula,
\[ \frac{\dd ^{n}}{\dd x^{n}} f(g(x))=\sum \frac{n !}{m_{1} ! m_{2} ! \cdots m_{n} !} \cdot f^{\left(m_{1}+\cdots+m_{n}\right)}(g(x)) \cdot \prod_{j=1}^{n}\left(\frac{g^{(j)}(x)}{j !}\right)^{m_{j}}, \]
where the summation is over $n$--tuples for which
\[ 1 \cdot m_{1}+2 \cdot m_{2}+3 \cdot m_{3}+\cdots+n \cdot m_{n}=n , \]
to determine the sign of $\phi^{(n)}(u)$. Notice that
\begin{eqnarray*}
\mbox{sign}\left( f^{(m_1+m_2+\cdots + m_n)}(g(u)) \prod_{i=1}^n g^{(i)}(u)^{m_i}  \right)&=&(-1)^{m_1+\cdots + m_n} \prod_{i=1}^n (-1)^{m_i (i+1)} \\
&=&(-1)^{2\sum_i m_i} (-1)^{\sum_i i m_i} \\
&=&(-1)^n .
\end{eqnarray*}
Hence all terms in the summation have the same sign $(-1)^n$, 
and we conclude that also the sum itself has sign $(-1)^n$, i.e.  $\mbox{sign}(\phi^{(n)}(u))=(-1)^n$. Since $\phi (0)=\vect{\pi}(-\mat{T})\vect{t}=\vect{\pi}(-\mat{T})(-\mat{T}\vect{e})=\vect{\pi}\vect{e}=1$, the result then follows with Bernstein's theorem (see \cite[p.439]{Feller-vol-2}).
\end{proof}
\begin{remark}\normalfont
Note that the proof does not rely on the special form of $u^\alpha$, and it follows that if $g$ is
any function with $g(0)=0$ and $-g$ completely monotone, then 
\[
  \phi (u)=\vect{\pi}(g(u) \mat{I}- \mat{T})^{-1}\vect{t}     \]
  is the Laplace transform of a probability distribution, which may also be useful in other contexts.
\end{remark}

{ 
\begin{theorem}\label{th:density-MML}
Let $X$ be a random variable with Laplace transform \eqref{eq:LPT-ML-PH}. Then the density function of $X$ is given by 
\[  f(x)=x^{\alpha-1}\vect{\pi}\,E_{\alpha,\alpha}\left(\mat{T}x^\alpha \right)\,\vect{t}  .  \]
\end{theorem}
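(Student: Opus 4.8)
The plan is to identify the distribution through its Laplace transform: I would compute
\[
\int_0^\infty e^{-ux}\, x^{\alpha-1}\,\vect{\pi}\,E_{\alpha,\alpha}(\mat{T}x^\alpha)\,\vect{t}\,\dd x
\]
directly and check that it equals $\phi(u)$ from \eqref{eq:LPT-ML-PH}; the claim then follows from Theorem~\ref{th:LPT-MML} together with the uniqueness theorem for Laplace transforms. Concretely, since $\alpha>0$ the matrix function $E_{\alpha,\alpha}$ is entire, so $E_{\alpha,\alpha}(\mat{T}x^\alpha)=\sum_{k\ge 0}\mat{T}^k x^{\alpha k}/\Gamma(\alpha k+\alpha)$, and integrating term by term with the elementary identity $\int_0^\infty e^{-ux}x^{\alpha k+\alpha-1}\dd x=\Gamma(\alpha k+\alpha)\,u^{-\alpha(k+1)}$ makes the Gamma factors cancel, leaving
\[
u^{-\alpha}\,\vect{\pi}\sum_{k\ge 0}\bigl(u^{-\alpha}\mat{T}\bigr)^k\vect{t}
= u^{-\alpha}\,\vect{\pi}\,\bigl(\mat{I}-u^{-\alpha}\mat{T}\bigr)^{-1}\vect{t}
= \vect{\pi}\,(u^\alpha\mat{I}-\mat{T})^{-1}\vect{t}=\phi(u).
\]

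The only step requiring real care is the justification of the termwise integration and of the matrix Neumann series. Both are immediate on the range $u^\alpha>\|\mat{T}\|$ (for any submultiplicative matrix norm): there the scalar majorant $\sum_{k\ge 0}\|\mat{T}\|^k u^{-\alpha(k+1)}$ converges, so Tonelli's theorem justifies exchanging sum and integral entrywise, and the spectral radius of $u^{-\alpha}\mat{T}$ is below one, so $\sum_{k\ge0}(u^{-\alpha}\mat{T})^k=(\mat{I}-u^{-\alpha}\mat{T})^{-1}$. Hence $\int_0^\infty e^{-ux}f(x)\,\dd x=\phi(u)$ for all sufficiently large real $u$. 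To pass to all $u>0$ I would observe that both sides are analytic in a right half-plane: the left side because, by the exponential growth of $E_{\alpha,\alpha}$ on the positive axis, the integrand is dominated (up to a polynomial in $x$) by $e^{(\|\mat{T}\|^{1/\alpha}-\operatorname{Re}u)x}$; the right side because $(u^\alpha\mat{I}-\mat{T})^{-1}$ is analytic wherever $u^\alpha\notin\operatorname{spec}(\mat{T})$. The identity therefore extends to the common domain of analyticity, in particular to all $u>0$.

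Finally, by Theorem~\ref{th:LPT-MML} $\phi$ is the Laplace transform of a probability distribution on $(0,\infty)$; since $f$ is continuous on $(0,\infty)$ and its Laplace transform coincides with $\phi$, the uniqueness theorem for Laplace transforms (see e.g.\ \cite{Feller-vol-2}) identifies $f$ as the density of that distribution, and, as a by-product, yields $f\ge 0$ and $\int_0^\infty f(x)\,\dd x=1$. I expect the analytic-continuation/domination bookkeeping of the second paragraph to be the only genuine obstacle; everything else is routine. An alternative that sidesteps it is to use the Jordan/Cauchy representation of $E_{\alpha,\alpha}(\mat{T}x^\alpha)$ recalled in Section~\ref{sec2} together with the scalar transform $\int_0^\infty e^{-ux}x^{\alpha-1}E_{\alpha,\alpha}(zx^\alpha)\,\dd x=(u^\alpha-z)^{-1}$ applied at each eigenvalue $z$ of $\mat{T}$, but the power-series route is the most transparent.
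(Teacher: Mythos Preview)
Your proposal is correct and follows essentially the same route as the paper: expand $E_{\alpha,\alpha}(\mat{T}x^\alpha)$ as a power series, integrate term by term so that the Gamma factors cancel, sum the resulting Neumann series to obtain $\vect{\pi}(u^\alpha\mat{I}-\mat{T})^{-1}\vect{t}$ for large $u$, and then extend by analytic continuation. Your treatment is in fact more careful than the paper's, which simply writes ``for $u$ sufficiently large'' and ``by analytic continuation'' without the Tonelli/norm-bound justification you supply.
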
}
\begin{proof}
We show that $f$ has the required Laplace transform. For $u$ sufficiently large one has
\begin{eqnarray*}
\int_0^\infty e^{-ux}f(x)\dd x&=& \vect{\pi}\int_0^\infty e^{-ux}\sum_{n=0}^\infty \frac{\mat{T}^n x^{\alpha n}}{\Gamma ((n+1)\alpha)}x^{\alpha -1}\dd x \ \vect{t} \\
&=&\vect{\pi}\sum_{n=0}^\infty \frac{\mat{T}^n}{\Gamma ((n+1)\alpha)}\int_0^\infty x^{\alpha (n+1)-1}e^{-ux}\dd x\ \vect{t}\\
&=&\vect{\pi}\sum_{n=0}^\infty \frac{\mat{T}^n}{\Gamma ((n+1)\alpha)}\Gamma ((n+1)\alpha)u^{-(n+1)\alpha} \vect{t} \\
&=&-\vect{\pi}\sum_{n=0}^\infty \mat{T}^{n+1}u^{-(n+1)\alpha} \vect{e} \\
&=&-\vect{\pi}\sum_{n=1}^{\infty} (-\mat{T}u^{-\alpha})^n \vect{e} \\
&=&-\vect{\pi}(\mat{I}-\mat{T}u^{-\alpha})^{-1}(\mat{T}u^{-\alpha})\vect{e} \\
&=&\vect{\pi}(u^\alpha \mat{I}-\mat{T})^{-1}\vect{t} .
\end{eqnarray*}
The result for all $u\in [0,\infty)$ then follows by analytic continuation.
\end{proof}

\begin{definition}
Let $(\vect{\pi},\mat{T})$ be a PH generator and let $0<\alpha\leq 1$. A random variable $X$ is said to have a matrix Mittag--Leffler distribution, if its Laplace transform is given by
\[  \E (e^{-uX})=\vect{\pi}(u^\alpha \mat{I}- \mat{T})^{-1}\vect{t} . \]
In this case we write $ X\sim \mbox{MML}(\alpha,\vect{\pi},\mat{T})$.
\end{definition}
{ 
\begin{remark}\rm 
The proof Theorem \ref{th:density-MML} shows that for any triplet $(\vect{\pi},\mat{T},\vect{t})$ for which
 \begin{equation}\label{medens} f(x) = \vect{\pi}e^{\mat{T}x}\vect{t} \end{equation}
is a density function, the function $\phi (u) =\vect{\pi}(u^\alpha \mat{I}- \mat{T})^{-1}\vect{t} $ is indeed a
Laplace transform of a probability distribution. Distributions with density \eqref{medens} are referred to as matrix--exponential distributions, and contain the class of phase--type distributions as a strict subset. We have stated the above definition in terms of a phase--type generator, but it is hence clear that the construction also applies to any matrix--exponential distribution. While most results in the following could be stated in terms of a matrix--exponential triplet, for the sake of simplicity and notation we restrict the parameters to phase--type generators only. 
\end{remark}
}

\begin{corollary}\label{cor:cdf-MML}
Let $X\sim \mbox{MML}(\alpha,\vect{\pi},\mat{T})$. Then the cumulative distribution function for $X$ is given by  
\[  F(x)=1-\vect{\pi}E_{\alpha,1}\left(\mat{T}x^\alpha \right)\vect{e}  .  \]
\end{corollary}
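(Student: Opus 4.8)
The plan is to obtain $F$ simply by integrating the density from Theorem~\ref{th:density-MML}, exploiting the fact that $E_{\alpha,1}(\mat{T}x^\alpha)$ is, up to a sign and constants, an antiderivative of $x^{\alpha-1}E_{\alpha,\alpha}(\mat{T}x^\alpha)\mat{T}$.

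First I would establish the matrix identity
\[
\frac{\dd}{\dd x}E_{\alpha,1}(\mat{T}x^\alpha)=x^{\alpha-1}\,\mat{T}\,E_{\alpha,\alpha}(\mat{T}x^\alpha),\qquad x>0,
\]
by differentiating the defining power series $E_{\alpha,1}(\mat{T}x^\alpha)=\sum_{n\ge 0}\mat{T}^n x^{\alpha n}/\Gamma(\alpha n+1)$ term by term. Since this is an entire matrix function, the series converges absolutely and uniformly on compact subsets of $[0,\infty)$ in any submultiplicative matrix norm, which legitimises term-wise differentiation; differentiating the $n$-th term gives $\alpha n\,\mat{T}^n x^{\alpha n-1}/\Gamma(\alpha n+1)=\mat{T}^n x^{\alpha n-1}/\Gamma(\alpha n)$, the $n=0$ term vanishes, and re-indexing by $m=n-1$ yields $x^{\alpha-1}\mat{T}\sum_{m\ge 0}\mat{T}^m x^{\alpha m}/\Gamma(\alpha m+\alpha)=x^{\alpha-1}\mat{T}E_{\alpha,\alpha}(\mat{T}x^\alpha)$.

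Next, since $\mat{T}$ commutes with $E_{\alpha,\alpha}(\mat{T}x^\alpha)$ and $-\mat{T}\vect{e}=\vect{t}$, the displayed identity gives
\[
-\frac{\dd}{\dd x}\Big(\vect{\pi}\,E_{\alpha,1}(\mat{T}x^\alpha)\,\vect{e}\Big)
= x^{\alpha-1}\,\vect{\pi}\,E_{\alpha,\alpha}(\mat{T}x^\alpha)\,(-\mat{T}\vect{e})
= x^{\alpha-1}\,\vect{\pi}\,E_{\alpha,\alpha}(\mat{T}x^\alpha)\,\vect{t}=f(x),
\]
the density from Theorem~\ref{th:density-MML}. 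Because $f$ is continuous on $(0,\infty)$ and integrable near $0$ (the factor $x^{\alpha-1}$ with $\alpha>0$), one has $F(x)=\int_0^x f(t)\,\dd t$; integrating the identity above and using $E_{\alpha,1}(\mat{0})=\mat{I}$ together with $\vect{\pi}\vect{e}=1$ gives
\[
F(x)=\Big[-\vect{\pi}E_{\alpha,1}(\mat{T}t^\alpha)\vect{e}\Big]_0^x
=\vect{\pi}\vect{e}-\vect{\pi}E_{\alpha,1}(\mat{T}x^\alpha)\vect{e}
=1-\vect{\pi}E_{\alpha,1}(\mat{T}x^\alpha)\vect{e}.
\]

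The only genuinely technical point is the justification of the term-by-term differentiation of the matrix ML series, which is routine since $E_{\alpha,1}$ is entire; everything else is a direct computation. As a cross-check one can verify the correct limits $F(0^+)=0$ and $F(\infty)=1$, the latter because the eigenvalues of $\mat{T}$ have strictly negative real parts, so that $E_{\alpha,1}(\mat{T}x^\alpha)\to\mat{0}$. Alternatively, one could bypass the density entirely and verify directly that $\int_0^\infty e^{-ux}\big(1-\vect{\pi}E_{\alpha,1}(\mat{T}x^\alpha)\vect{e}\big)\dd x$ equals $\phi(u)/u$, the Laplace transform of a cumulative distribution function, using the same term-by-term computation as in the proof of Theorem~\ref{th:density-MML}.
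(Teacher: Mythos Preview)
Your proof is correct and follows essentially the same approach as the paper: both arguments differentiate $\vect{\pi}E_{\alpha,1}(\mat{T}x^\alpha)\vect{e}$ term by term, identify the result with the density $f$ of Theorem~\ref{th:density-MML} via $-\mat{T}\vect{e}=\vect{t}$, and check the boundary value at $x=0$. Your write-up is somewhat more explicit about justifying the term-wise differentiation and about the role of $\vect{\pi}\vect{e}=1$, but the mathematical content is the same.
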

\begin{proof}
The derivative of this function is
\begin{eqnarray*}
F^\prime(x)&=&-\vect{\pi}\sum_{n=1}^\infty \frac{\mat{T}^n n x^{\alpha (n-1)}\alpha x^{\alpha-1}}{\Gamma (1+\alpha n)}\vect{e} \\
&=&-x^{\alpha-1}\vect{\pi}\sum_{n=1}^\infty \frac{ (\mat{T}x^\alpha)^{n-1}}{\Gamma (\alpha n)}\mat{T} \vect{e} \\
&=&x^{\alpha-1}\vect{\pi}E_{\alpha,\alpha}(\mat{T}x^\alpha)\vect{t} ,
\end{eqnarray*}
which indeed coincides with the density. It remains to note that $F$ satisfies the boundary condition $F(0)=0$.
\end{proof}
One can now realize that the matrix ML distribution provides an extension of representation \eqref{prodgml} in the following way:
\begin{theorem}\label{repPHstable}
Let $X\sim \mbox{MML}(\alpha,\vect{\pi},\mat{T})$. Then
\begin{align}\label{prodrepresentation}
X\stackrel{d}{=}W^{1/\alpha}S_\alpha,
\end{align}
where $W\sim \mbox{PH}(\vect{\pi},\:\vect{T})$, and $S_\alpha$ is an independent (positive stable) random variable with Laplace transform given by $\exp(-u^{\alpha})$.
\end{theorem}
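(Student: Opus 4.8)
The plan is to establish \eqref{prodrepresentation} by comparing Laplace transforms and invoking their uniqueness for distributions on $[0,\infty)$. Since $W\sim\mathrm{PH}(\vect{\pi},\mat{T})$ and $S_\alpha$ are nonnegative and independent, the product $W^{1/\alpha}S_\alpha$ is a nonnegative random variable, so it suffices to verify that $\E(e^{-uW^{1/\alpha}S_\alpha})=\vect{\pi}(u^\alpha\mat{I}-\mat{T})^{-1}\vect{t}$ for all $u\ge 0$, this being precisely the Laplace transform of $X\sim\mathrm{MML}(\alpha,\vect{\pi},\mat{T})$.

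First I would condition on $W$. Given $W=w$, the variable $W^{1/\alpha}S_\alpha$ is the deterministic rescaling $w^{1/\alpha}S_\alpha$. Using the defining Laplace transform $\E(e^{-uS_\alpha})=\exp(-u^\alpha)$ together with the elementary scaling identity $\E(e^{-ucS_\alpha})=\exp(-(uc)^\alpha)$ valid for every constant $c>0$, one obtains
\[
\E\bigl(e^{-uW^{1/\alpha}S_\alpha}\mid W=w\bigr)=\exp\bigl(-(uw^{1/\alpha})^\alpha\bigr)=\exp(-u^\alpha w).
\]
Taking expectations over $W$ then gives $\E(e^{-uW^{1/\alpha}S_\alpha})=\E(e^{-u^\alpha W})=L_W(u^\alpha)$, where $L_W$ denotes the Laplace transform of $W$.

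Next I would substitute the phase-type Laplace transform recalled in Section~\ref{subsec:PH}, namely $L_W(s)=\vect{\pi}(s\mat{I}-\mat{T})^{-1}\vect{t}$. Since $W$ is light-tailed, $L_W(s)$ is finite for all $s\ge 0$, and $u^\alpha\ge 0$ whenever $u\ge 0$, so the substitution $s=u^\alpha$ is legitimate for every $u\ge 0$ and yields $\E(e^{-uW^{1/\alpha}S_\alpha})=\vect{\pi}(u^\alpha\mat{I}-\mat{T})^{-1}\vect{t}$. This is exactly the Laplace transform appearing in the definition of $\mathrm{MML}(\alpha,\vect{\pi},\mat{T})$, and the uniqueness theorem for Laplace transforms forces $X\stackrel{d}{=}W^{1/\alpha}S_\alpha$.

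There is no genuinely hard step in this argument; the only point worth highlighting is that the exponent $1/\alpha$ on $W$ is chosen precisely so that the stable scaling produces $(uw^{1/\alpha})^\alpha=u^\alpha w$, which is what converts the self-similarity of $S_\alpha$ into the substitution $s\mapsto u^\alpha$ inside the phase-type resolvent. A brief check that all Laplace transforms involved are finite on the relevant ranges — the conditional one for all $u\ge 0$, and $L_W$ on $[0,\infty)$ — completes the proof. One may also view this as the matrix analogue of the scalar product representation \eqref{prodgml}, with the scalar Laplace transform $(1+u^\alpha)^{-\beta}$ replaced by the matrix resolvent $\vect{\pi}(u^\alpha\mat{I}-\mat{T})^{-1}\vect{t}$, so that the computation is structurally identical.
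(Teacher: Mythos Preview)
Your proof is correct and follows essentially the same route as the paper: both condition on $W$, use the stable Laplace transform to reduce $\E(e^{-uW^{1/\alpha}S_\alpha})$ to $\E(e^{-u^\alpha W})$, and then identify this with the phase-type resolvent $\vect{\pi}(u^\alpha\mat{I}-\mat{T})^{-1}\vect{t}$. The only cosmetic difference is that the paper writes out the integral $\vect{\pi}\int_0^\infty e^{-x(u^\alpha\mat{I}-\mat{T})}\dd x\,\vect{t}$ explicitly rather than quoting the formula $L_W(s)=\vect{\pi}(s\mat{I}-\mat{T})^{-1}\vect{t}$ from Section~\ref{subsec:PH}.
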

\begin{proof}
Simply note that
\begin{align*}
\E(\exp(-u W^{1/\alpha}S_\alpha))&=\int_0^\infty \E(\exp(-ux^{1/\alpha}S_\alpha))\vect{\pi} e^{\vect{T}x}\vect{t} \dd x\\
&=\vect{\pi} \int_0^\infty e^{-u^{\alpha}x} e^{\vect{T}x}\dd x\, \vect{t}\\
&=\vect{\pi} \int_0^\infty e^{-x(u^{\alpha}\vect{I}-\vect{T})}\dd x\, \vect{t}\\
&=\vect{\pi}(u^\alpha \mat{I}- \mat{T})^{-1}\,\vect{t}.
\end{align*}
\end{proof}
\begin{corollary}
Let $X\sim \mbox{MML}(\alpha,\vect{\pi},\mat{T})$. The fractional moments of order $\rho<\alpha\le 1$ are given by
\begin{align*}
\E(X^\rho)=\frac{\Gamma(1-\rho/\alpha)\Gamma(1+\rho/\alpha) \boldsymbol{\pi}(-\boldsymbol{T})^{-\rho/\alpha} \boldsymbol{e}}{\Gamma(1-\rho)}.
\end{align*}
\end{corollary}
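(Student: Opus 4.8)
The plan is to compute the fractional moments directly from the product representation $X \stackrel{d}{=} W^{1/\alpha}S_\alpha$ established in Theorem \ref{repPHstable}, using independence of $W$ and $S_\alpha$. By independence, $\E(X^\rho) = \E(W^{\rho/\alpha})\,\E(S_\alpha^\rho)$, so the problem splits into two separate moment computations. For the first factor, since $W \sim \mbox{PH}(\vect{\pi},\mat{T})$, the fractional moment formula recalled in Section \ref{subsec:PH}, namely $\E(W^s) = \Gamma(s+1)\,\vect{\pi}(-\mat{T})^{-s}\vect{e}$, gives $\E(W^{\rho/\alpha}) = \Gamma(1+\rho/\alpha)\,\vect{\pi}(-\mat{T})^{-\rho/\alpha}\vect{e}$; this is valid for $\rho/\alpha > -1$, which certainly holds here.

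The second factor requires the fractional moments of the positive $\alpha$-stable random variable $S_\alpha$ with Laplace transform $\exp(-u^\alpha)$. The key fact is that for such a stable law, $\E(S_\alpha^\rho) = \Gamma(1-\rho/\alpha)/\Gamma(1-\rho)$ for $\rho < \alpha$ (and the moments are infinite for $\rho \ge \alpha$, consistent with the regular variation of index $\alpha$). I would derive this via the standard Mellin-transform / Gamma-integral trick: for $0 < \rho < 1$ one writes $s^\rho = \frac{1}{\Gamma(-\rho)}\int_0^\infty (e^{-us}-1)\,u^{-\rho-1}\,\dd u$ (the Frullani-type representation of the power function), take expectations, interchange (justified by Tonelli on the rewritten nonnegative integrand), and use $\E(e^{-uS_\alpha}-1) = e^{-u^\alpha}-1$ together with a change of variables $v = u^\alpha$; the resulting Beta-type integral evaluates to $\Gamma(1-\rho/\alpha)/\Gamma(1-\rho)$ after applying the reflection formula. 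Alternatively one may simply cite a standard reference on stable distributions (e.g. Samorodnitsky--Taqqu) for this Mellin transform.

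Multiplying the two factors yields
\[
\E(X^\rho) = \Gamma\!\left(1+\tfrac{\rho}{\alpha}\right)\vect{\pi}(-\mat{T})^{-\rho/\alpha}\vect{e}\cdot\frac{\Gamma\!\left(1-\tfrac{\rho}{\alpha}\right)}{\Gamma(1-\rho)} = \frac{\Gamma(1-\rho/\alpha)\,\Gamma(1+\rho/\alpha)\,\vect{\pi}(-\mat{T})^{-\rho/\alpha}\vect{e}}{\Gamma(1-\rho)},
\]
which is exactly the claimed identity. The constraint $\rho < \alpha \le 1$ is precisely what keeps $\Gamma(1-\rho/\alpha)$ finite and the stable moment well-defined, and it also makes $-\mat{T})^{-\rho/\alpha}$ unambiguous since $-\mat{T}$ has eigenvalues with positive real part.

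The main obstacle is the rigorous handling of the stable fractional moment: one must justify the interchange of expectation and the singular integral defining $s^\rho$, and correctly track the $\Gamma$-factors through the substitution $v=u^\alpha$ and the reflection formula $\Gamma(z)\Gamma(1-z) = \pi/\sin(\pi z)$ so that the negative-argument Gamma functions cancel cleanly. A clean alternative that sidesteps this is to invoke the known Mellin transform of a positive stable law directly from the literature; everything else (the PH moment formula, the independence split, the algebra) is routine.
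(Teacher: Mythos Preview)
Your proposal is correct and follows essentially the same approach as the paper: split $\E(X^\rho)$ via the product representation $X\stackrel{d}{=}W^{1/\alpha}S_\alpha$ and independence, apply the PH fractional moment formula to $W^{\rho/\alpha}$, and use the known stable moment $\E(S_\alpha^\rho)=\Gamma(1-\rho/\alpha)/\Gamma(1-\rho)$. The only difference is that the paper simply cites this last identity (from Wolfe), whereas you sketch a Mellin-type derivation before noting that a citation would suffice.
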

\begin{proof}
It is known \cite{Wolfe1975OnMO} that the fractional moments of a random variable $S_\alpha$ with Laplace transform $\exp(-u^{\alpha})$ are given by
\begin{align*}
\E(S_\alpha^\rho)=\frac{\Gamma(1-\rho/\alpha)}{\Gamma(1-\rho)}.
\end{align*}
From \cite{bladt2017matrix} we know that the $\nu$th fractional moment of a random variable $W$ with  PH($\vect{\pi},\:\vect{T}$) distribution is given by

\[\E(W^\nu)= \Gamma(\nu+1) \boldsymbol{\pi}(-\boldsymbol{T})^{-\nu} \boldsymbol{e}. \]
By Theorem \ref{repPHstable}, and setting $\nu=\rho/\alpha$, $X$ will have the $\rho$th fractional moment given by
\begin{align*}
\E(X^\rho)=\E(S_\alpha^{\rho})\E(W^{\rho/\alpha})=\frac{\Gamma(1-\rho/\alpha)\Gamma(1+\rho/\alpha) \boldsymbol{\pi}(-\boldsymbol{T})^{-\rho/\alpha} \boldsymbol{e}}{\Gamma(1-\rho)}.
\end{align*}
\end{proof}
\begin{remark}\normalfont
Representation \eqref{prodrepresentation} is not only useful for establshing closed-form formulas, but it also suggests a simple and efficient simulation technique for $X$. Simulation algorithms for PH and stable distributions are for instance available in the statistical software R via the packages \texttt{actuar} and \texttt{stabledist}, respectively.
\end{remark}

\begin{example}\label{ex:erlang}\rm
Consider $X\sim \mbox{MML}(\alpha,\vect{\pi},\mat{T})$ where $(\vect{\pi},\mat{T})$ is the PH representation of an Erlang distribution with $p$ ($p\in{\mathbb N}$) stages and intensity $\lambda$, i.e.
$\vect{\pi}=(1,0,...,0)$ and 
\[ \mat{T} = \begin{pmatrix}
-\lambda & \lambda & 0 & ... & 0 & 0 \\
0 & -\lambda & \lambda & ... & 0 & 0 \\
0 & 0 & -\lambda & ... & 0 & 0 \\
\vdots & \vdots &\vdots  & \vdots \vdots \vdots  &\vdots &\vdots  \\
0 & 0 & 0 & ... & -\lambda & \lambda \\
0 & 0 & 0 & ... & 0 & -\lambda
\end{pmatrix} .\]
In this case
\[ (s\mat{I}-\mat{T})^{-1} = 
\begin{pmatrix}
\frac{1}{s+\lambda} & \frac{\lambda}{(s+\lambda)^2} &  \frac{\lambda^2}{(s+\lambda)^3} & ... &  \frac{\lambda^{p-1}}{(s+\lambda)^p} \\
0 & \frac{1}{s+\lambda} & \frac{\lambda}{(s+\lambda)^2} &  ... & \frac{\lambda^{p-2}}{(s+\lambda)^{p-1}} \\
0 & 0 &  \frac{1}{s+\lambda} & ... & \frac{\lambda^{p-3}}{(s+\lambda)^{p-2}} \\
\vdots & \vdots & \vdots & \vdots\vdots\vdots & \vdots \\
0 & 0 & 0 & ... & \frac{1}{s+\lambda}
\end{pmatrix}, \]
so
\begin{eqnarray*}
f(x)&=&x^{\alpha-1}\vect{\pi}E_{\alpha,\alpha}(\mat{T}x^{\alpha})\vect{t} \\
&=&x^{\alpha-1}\frac{1}{2\pi \ii}\int_\gamma E_{\alpha,\alpha}(s)\vect{\pi}(s\mat{I}-x^\alpha \mat{T})^{-1}\,\vect{t}\,\dd s  \\
&=&x^{\alpha-1}\frac{1}{2\pi \ii}\int_\gamma  E_{\alpha,\alpha}(s) \lambda \frac{(\lambda x^\alpha)^{p-1}}{(s+x^\alpha \lambda)^p} \dd s \\
&=&\frac{\lambda^p x^{\alpha p-1}}{(p-1)!} E_{\alpha,\alpha}^{(p-1)}(-\lambda x^\alpha),
\end{eqnarray*}
where $\gamma$ is a simple path enclosing $-\lambda x^\alpha$, and where in the last step we used the residue theorem. Note that this corresponds to the GML random variable given already for general shape parameter $p=\beta\in{\mathbb R}_+$ in \eqref{prodgml} (although this explicit form of the density was not given in \cite{jose2010generalized}). Figure \ref{erlangfig} depicts the density for several choices of parameters. {  The parameter $\alpha$ controls the heaviness of the tail (and more generally the deviation from exponentiality of the Mittag-Leffler function), the parameter $p$ determines the shape of the body of the distribution (since larger $p$ implies a more pronounced Erlang component), and $\lambda$ is a scaling parameter.}
\begin{figure}[h]
\centering
\includegraphics[width=8cm,trim=.5cm .5cm .5cm .5cm,clip]{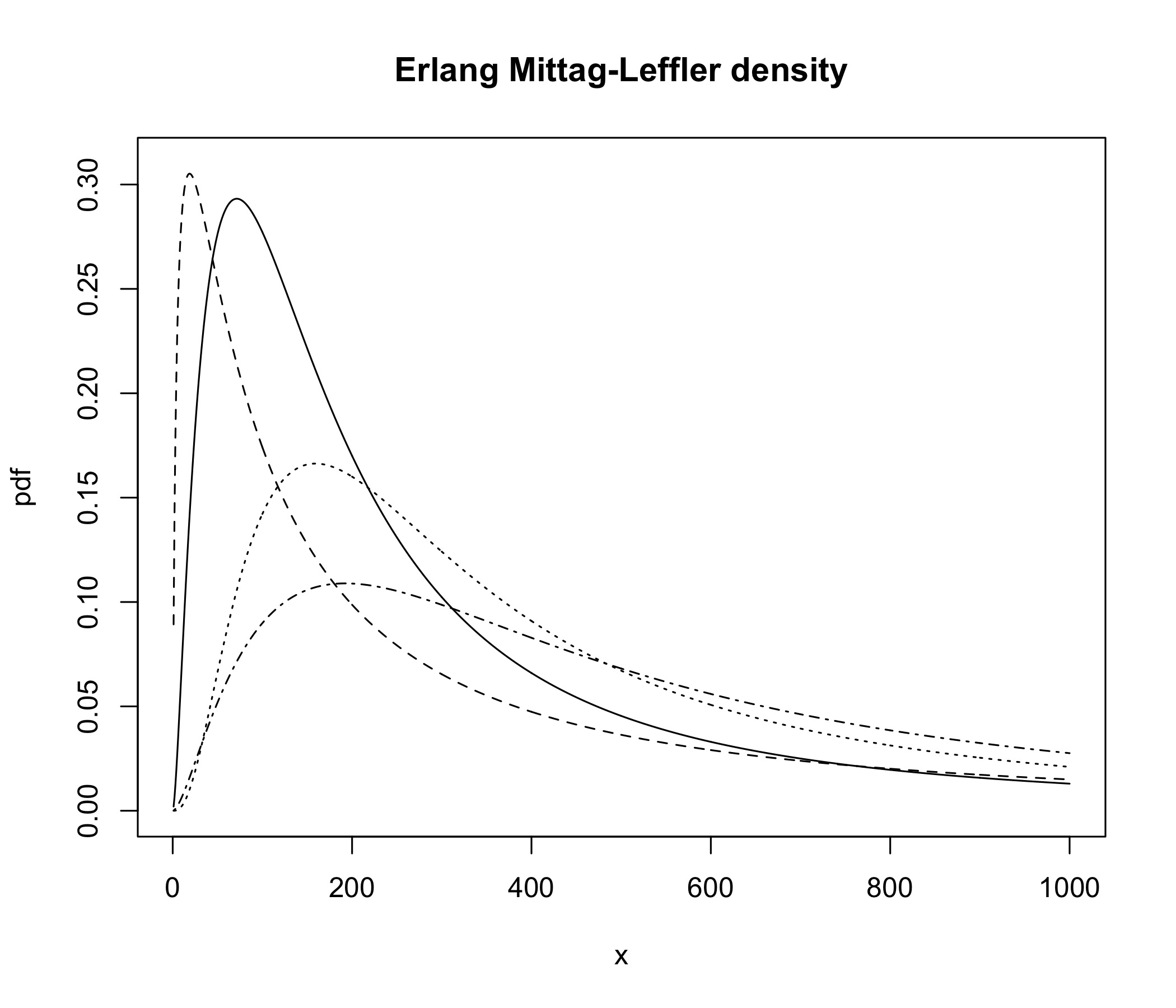}
\caption{Density of a $\mbox{MML}(\alpha,\vect{\pi},\mat{T})$, for $\alpha=0.7$ and Erlang phase-type component with $p=4$ and $\lambda=2$ (solid). The remaining curves have a change in one of the parameters: $\alpha=0.5$ (dashed), $p=6$ (dotted), and $\lambda=1$ (dashed and dotted).} 
\label{erlangfig}
\end{figure}
\qed 
\end{example}

\begin{example}\rm
Mixture of Erlang distributions form the simplest sub--class of PH distributions which are dense in the class of distributions on the positive real line (in the sense of weak convergence). Let $h$ be the density
\[ h(x)=\sum_{i=1}^m \theta_i f_{{Erl}}(x;{p_i},\lambda_i) , \]
where $f_{{Erl}}(x;{p_i},\lambda_i) = \lambda_i^{p_i}x^{p_i-1}\exp (-\lambda_i x)/(p_i-1)!$ denotes the density of an Erlang distribution and $\theta_i\geq 0$ are weights with $\sum\theta_i=1$. Then it follows immediately from Example \ref{ex:erlang} that the corresponding MML distribution has density
\[  f(x) = \sum_{i=1}^m \theta_i \frac{\lambda_i^{p_i} x^{\alpha p_i-1}}{(p_i-1)!} E_{\alpha,\alpha}^{(p_i-1)}(-\lambda_i x^\alpha). \]
In Figure \ref{mixerlangfig}, a trimodal distribution is considered, corresponding to $X\sim\mbox{MML}(\alpha,\vect{\pi},\mat{T})$, for a mixture of three Erlang PH components. The densities of  $\log(X)$ and $X$ are both depicted. 
\qed 
\end{example}

\begin{figure}[hh]
\centering
\includegraphics[width=14cm,trim=.5cm 2cm 2cm 2cm,clip]{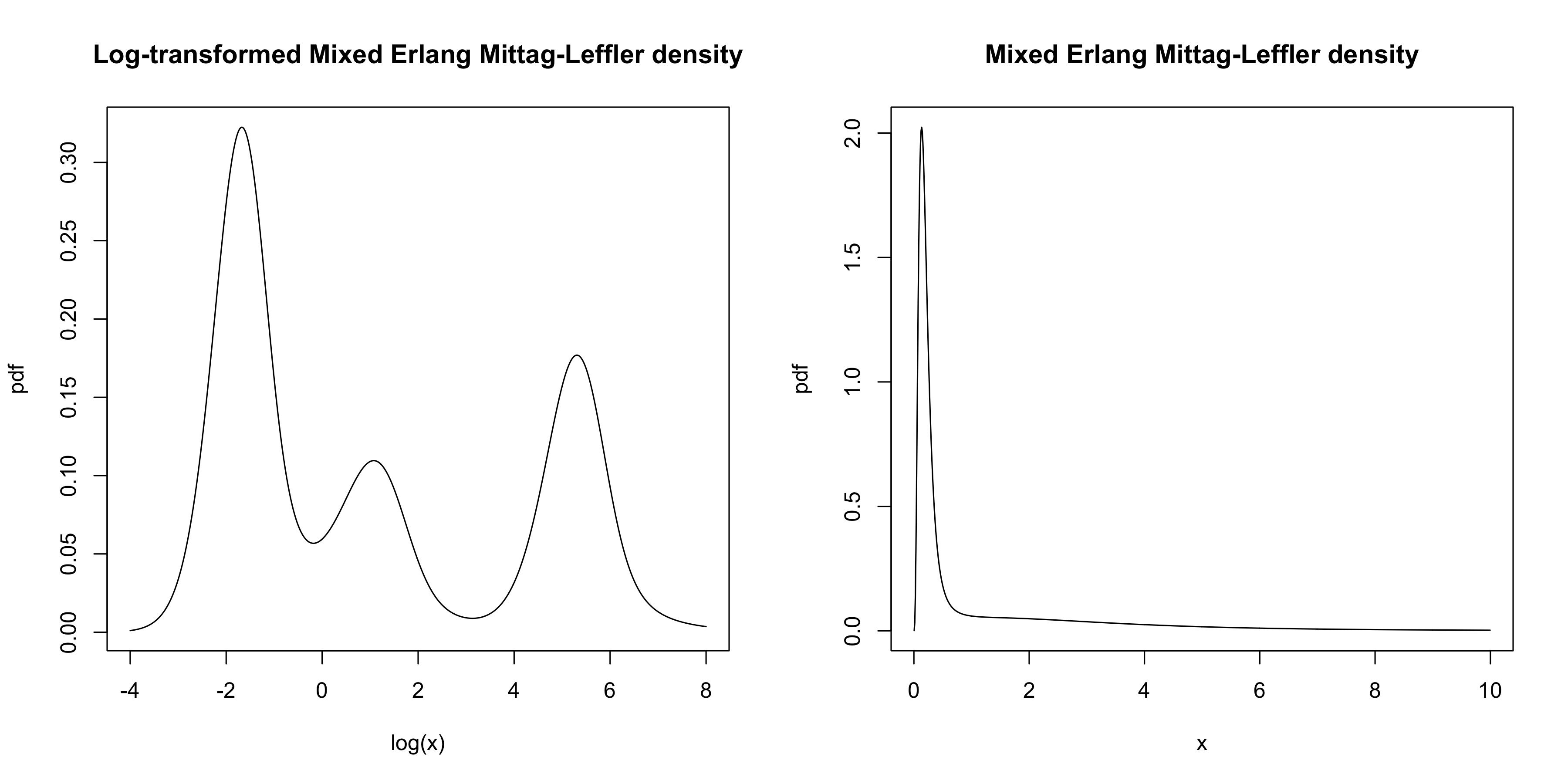}
\caption{Densities of $\log(X)$ and of $X$, where $X\sim\mbox{MML}(\alpha,\vect{\pi},\mat{T})$ is a mixture of $m=3$ Erlang PH components with parameters $\alpha=0.9$, $p_1=5,\:p_2=3,\:p_3=4$, $\lambda_1=20,\:\lambda_2=1,\:\lambda_3=0.03$,  $\theta_1=0.5,\:\theta_2=0.2,\:\theta_3=0.3$.} 
\label{mixerlangfig}
\end{figure}

\begin{example}\rm
A Coxian phase--type distribution has a representation of the form
\[  \vect{\pi}=(\pi_1,...,\pi_p), \ \ \mat{T}=
\begin{pmatrix}
-\lambda_1 & \lambda_1 & 0 & ... & 0 \\
0 & -\lambda_2 & \lambda_2  & ... & 0 \\
0 & 0 & -\lambda_3 & ... & 0 \\
\vdots & \vdots & \vdots & \vdots\vdots\vdots & \vdots\\
0 & 0 & 0 & ... & -\lambda_p
\end{pmatrix} ,
   \]
   where all $\lambda_i$, $i=1,\dots,p,$ are distinct.
   In this case
   \[  (z\mat{I}-\mat{T})^{-1} =
\begin{pmatrix}
\frac{1}{z+\lambda_1} & \frac{\lambda_1}{(z+\lambda_1)(z+\lambda_2)} & \frac{\lambda_1\lambda_2}{(z+\lambda_1)(z+\lambda_2)(z+\lambda_3)} & \cdots & \frac{\lambda_1\cdots \lambda_{p-1}}{(z+\lambda_1)(z+\lambda_2)\cdots (z+\lambda_p)} \\
0 & \frac{1}{z+\lambda_2} & \frac{\lambda_2}{(z+\lambda_2)(z+\lambda_3)} & \cdots & \frac{\lambda_2\cdots \lambda_{p-1}}{(z+\lambda_2)(z+\lambda_3)\cdots (z+\lambda_p)} \\
0 & 0 & \frac{1}{z+\lambda_3} & \cdots & \frac{\lambda_3\cdots \lambda_{p-1}}{(z+\lambda_3)(z+\lambda_4)\cdots (z+\lambda_p)} \\
\vdots & \vdots & \vdots & \vdots\vdots\vdots & \vdots\\
0 & 0 & 0 & \cdots & \frac{1}{z+\lambda_p}
\end{pmatrix},
     \]
 so
 \begin{eqnarray*}
f(x)
&=&x^{\alpha-1}\frac{1}{2\pi \ii}\int_\gamma E_{\alpha,\alpha}(s)\vect{\pi}(s\mat{I}-x^\alpha \mat{T})^{-1}\vect{t}\dd s  \\
&=& x^{\alpha-1}\sum_{j=1}^p \pi_j \frac{1}{2\pi \ii}\int_\gamma E_{\alpha,\alpha}(s) \frac{(\lambda_jx^{\alpha})(\lambda_{j+1}x^{\alpha})\cdots (\lambda_{p-1}x^{\alpha})\lambda_p}{(s+\lambda_jx^\alpha)(s+\lambda_{j+1}x^\alpha)\cdots (s+\lambda_px^\alpha)}\dd s \\
&=&\sum_{j=1}^p \pi_j x^{\alpha (p-j+1)-1}\left(\prod_{k=j}^{p}\lambda_k \right)\frac{1}{2\pi \ii}\int_\gamma  \frac{E_{\alpha,\alpha}(s)}{(s+x^\alpha\lambda_j)\cdots (s+x^\alpha\lambda_p)}\dd s \\
&=&\sum_{j=1}^p \pi_j x^{\alpha (p-j+1)-1}\left(\prod_{k=j}^{p}\lambda_k \right)\sum_{m=j}^p  \frac{E_{\alpha,\alpha}(-\lambda_m x^\alpha)}{\displaystyle\prod_{\stackrel{n=j}{n\neq m}}^p (-x^\alpha \lambda_m+x^\alpha \lambda_n)}\\
&=& x^{\alpha-1}\sum_{j=1}^p \pi_j  \left(\prod_{k=j}^{p}\lambda_k \right)
\sum_{m=j}^p \frac{E_{\alpha,\alpha}(-\lambda_m x^\alpha)}{\displaystyle\prod_{\stackrel{n=j}{n\neq m}}^p ( \lambda_n- \lambda_m)}.
\end{eqnarray*}  
In Figure \ref{coxfig} four such densities are plotted.  
\begin{figure}[hh]
\centering
\includegraphics[width=8cm,trim=.5cm .5cm .5cm .5cm,clip]{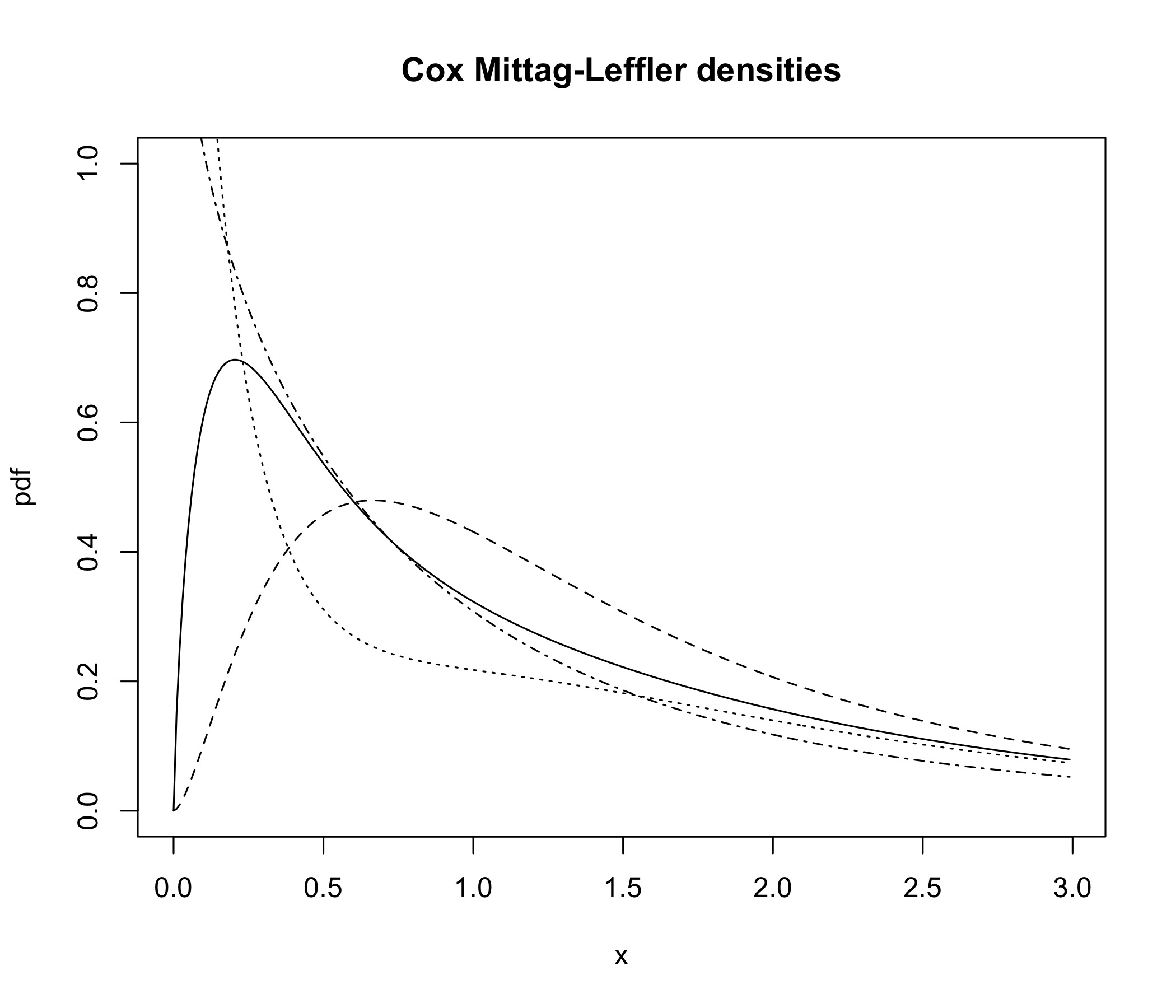}
\caption{Density of $X\sim\mbox{MML}(\alpha,\vect{\pi},\mat{T})$, with Coxian PH component ($p=4$, $\alpha=0.9$, $\lambda_1=1,\:\lambda_2=2,\:\lambda_3=3, \:\lambda_4=4$, and $\vect{\pi}=(0.5,0,0.5,0)$ (solid), $\vect{\pi}=(0.5,0.5,0,0)$ (dashed), $\vect{\pi}=(0.5,0,0,0.5)$ (dotted), and $\vect{\pi}=(0.25,0.25,0.25,0.25)$ (dashed--dotted)).} 
\label{coxfig}
\end{figure}
   \qed
\end{example}

\section{Sample path respresentations}\label{sec4}
{  We now provide two different representations of the MML distribution as sample path properties of a stochastic process. The first one will be as an absorption time of a randomly scaled time-inhomogeneous Markov jump process, and the second one as an absorption time of a particular semi-Markov process (where random time scaling is not needed).}

\subsection{Random time-inhomogeneous phase--type distributions}\label{subsec:random-scaling}
{  We recall from Section \ref{subsec:PH} that a
 random variable is PH distributed if it is the time until absorption of a time--homogeneous Markov jump process on a finite state--space, where one state is absorbing and the remaining states are transient. In this section we show that 
 a MML distribution can be interpreted as a  time--inhomogeneous phase--type distribution with random intensity matrix.}
 
 Let us define a random time-inhomogeneous Markov jump process $X_t$ as a jump process with state space $E=\{1,2,\dots,p,p+1\}$, where $p+1$ is an absorbing state, the remaining states being transient, and the intensity matrix given by
\begin{align}\label{intmat}
\bm{\Lambda}(t)=\frac{1}{Y}\begin{pmatrix}
    \bm{T}(t) & \bm{t}(t)  \\
    \bm{0} & 0
  \end{pmatrix},
\end{align}
where $\bm{t}(t)=-\bm{T}(t)\bm{e}$, $\e=(1,1,\dots,1)^T$, $\bm{0}=(0,0,\dots,0)$, $\p=(\pi_1,\dots,\pi_p)$,
\begin{align*}
\Prob(X_0=p+1)=0,\quad \Prob(X_0=k)=\pi_k, \:\: k=1,\dots,p,
\end{align*}
and the independent, positive random variable $Y$ is a random scaling factor. For the resulting absorption time
\begin{align*}
\tau=\inf\{t\ge0\,:X_t=p+1\},
\end{align*}
we write $\tau\sim \mbox{RIPH}(Y,\p,\TT(t))$. Note that the special case $Y\equiv 1$ corresponds to the IPH class in \cite{ab18inh}. 

If furthermore we can write $\TT(t)=\lambda(t)\TT$, the intensity matrix \eqref{intmat} takes the form \begin{align*}
\bm{\Lambda}(t)=\frac{\lambda(t)}{Y}\begin{pmatrix}
    \bm{T} & \bm{t}  \\
    \bm{0} & 0
  \end{pmatrix},
\end{align*}
in which case we write $\tau\sim \text{RIPH}(Y,\p,\TT,\lambda)$. The following result is then immediate.

\begin{theorem}\label{prodrep}

Let $\tau\sim \text{RIPH}(Y,\p,\TT,\lambda)$ for a random variable $Y$ with density $g$.  Then the density $f$ and distribution function $F$ of $\tau$ are given by
\begin{align*}
f(x)&=\int_0^\infty \lambda(x/v)\p\exp\left(\int_0^{x/v}\lambda(u)\dd u\TT\right)\T\,\frac{g(v)}{v}\dd v,\\
F(x)&=1-\int_0^\infty \p\exp\left(\int_0^{x/v}\lambda(u)\dd u \TT\right)\e \,g(v)\dd v.
\end{align*}
Furthermore, if $\lambda(t)$ is a strictly positive function and we define $h$ by
\begin{align*}
h^{-1}(x)=\int_0^x \lambda(t)\dd t,
\end{align*}
then
\begin{align}\label{prodrepform}
\tau\stackrel{d}{=}h(\tau_0)\cdot Y,
\end{align}
where $\tau_0\sim PH(\p,\TT)$.
\end{theorem}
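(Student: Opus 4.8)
The plan is to condition on the random scaling factor $Y$ and then appeal to the (time--inhomogeneous) phase--type theory recalled in Section \ref{subsec:PH} and in \cite{ab18inh}. The crux is that, conditionally on $\{Y=v\}$, the factor $1/v$ in \eqref{intmat} amounts to a deterministic rescaling of the time axis of an ordinary $\mathrm{IPH}(\p,\TT,\lambda)$ process: the time change $Z_s:=X_{vs}$, $s\ge0$, turns $\{X_t\}$ given $\{Y=v\}$ into an $\mathrm{IPH}(\p,\TT,\lambda)$ process, whose absorption time equals $\tau/v$, so that $\tau\mid\{Y=v\}\stackrel{d}{=}v\cdot\tau^{(0)}$ with $\tau^{(0)}\sim\mathrm{IPH}(\p,\TT,\lambda)$ not depending on $v$. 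Granting this, the distribution function, the density and the product representation \eqref{prodrepform} all follow by integrating against the law $g$ of $Y$.

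For the distribution function and density I would use the $\mathrm{IPH}$ formulas of \cite{ab18inh}: $\tau^{(0)}\sim\mathrm{IPH}(\p,\TT,\lambda)$ has survival function $\p\exp\!\left(\int_0^x\lambda(u)\dd u\,\TT\right)\e$ and density $\lambda(x)\,\p\exp\!\left(\int_0^x\lambda(u)\dd u\,\TT\right)\T$. Since $\tau\mid\{Y=v\}\stackrel{d}{=}v\,\tau^{(0)}$, the conditional survival function of $\tau$ is $\p\exp\!\left(\int_0^{x/v}\lambda(u)\dd u\,\TT\right)\e$ and the conditional density is $\frac{\lambda(x/v)}{v}\,\p\exp\!\left(\int_0^{x/v}\lambda(u)\dd u\,\TT\right)\T$ (being $1/v$ times the density of $\tau^{(0)}$ at $x/v$). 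Integrating these against $g$ yields the two displayed formulas; that the resulting $f$ is indeed the density of $\tau$ is a routine application of Tonelli's theorem.

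For \eqref{prodrepform} I would invoke the second half of the $\mathrm{IPH}$ description in \cite{ab18inh}: when $\lambda$ is strictly positive, $h^{-1}(x)=\int_0^x\lambda(u)\dd u$ is a strictly increasing bijection of $[0,\infty)$ fixing $0$, and then $\tau^{(0)}\stackrel{d}{=}h(\tau_0)$ with $\tau_0\sim\mathrm{PH}(\p,\TT)$. Combining this with $\tau\mid\{Y=v\}\stackrel{d}{=}v\cdot h(\tau_0)$ and the independence of $Y$ from $\tau_0$, and integrating over the law of $Y$, gives $\tau\stackrel{d}{=}Y\cdot h(\tau_0)=h(\tau_0)\cdot Y$. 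As a consistency check this re-derives $F$: conditioning $h(\tau_0)\cdot Y$ on $\{Y=v\}$ and using $\Prob(h(\tau_0)>x/v)=\Prob(\tau_0>h^{-1}(x/v))=\p\exp\!\left(h^{-1}(x/v)\,\TT\right)\e$ reproduces the claimed distribution function.

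I expect the main obstacle to be the careful bookkeeping of the two time deformations that are simultaneously in play: the random global rescaling of the clock carried by $1/Y$ in \eqref{intmat}, and the deterministic nonlinear clock change $h$ coming from $\lambda$. The point requiring care is that these compose into an \emph{outer} multiplicative factor $Y$ applied \emph{after} $h$ --- i.e.\ $\tau\stackrel{d}{=}h(\tau_0)\cdot Y$ and not, say, $h(Y\cdot\tau_0)$ --- which is exactly what the time-change identification of the first paragraph encodes; this is also the one place where the product structure $\TT(t)=\lambda(t)\TT$, making the relevant matrices commute so that genuine matrix exponentials rather than product integrals appear, is essential. The remaining ingredients (a.s.\ finiteness of $\tau$ given $Y$, measurability in $v$, and the application of Tonelli's theorem) are routine.
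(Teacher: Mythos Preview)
The paper gives no proof here; it simply labels the result ``immediate''. So there is no argument to compare against, but your time-change step contains a genuine error.

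You assert that $Z_s:=X_{vs}$ turns the conditional process (which, given $Y=v$, has sub-intensity $\tfrac{\lambda(t)}{v}\,\TT$) into an $\mathrm{IPH}(\p,\TT,\lambda)$ process. Under the clock change $Z_s=X_{vs}$ the intensity transforms by the chain rule as $\Lambda_Z(s)=v\,\Lambda_X(vs)$; with $\Lambda_X(t)=\tfrac{\lambda(t)}{v}\,\TT$ this gives $\Lambda_Z(s)=\lambda(vs)\,\TT$, not $\lambda(s)\,\TT$. Hence the conditional survival function coming directly from the intensity model \eqref{intmat} is
\[
\p\exp\!\Big(\tfrac{1}{v}\int_0^{x}\lambda(u)\,\dd u\;\TT\Big)\e,
\]
which coincides with the stated $\p\exp\!\big(\int_0^{x/v}\lambda(u)\,\dd u\;\TT\big)\e$ only when $h^{-1}(x)=\int_0^x\lambda(u)\,\dd u$ is linear, i.e.\ when $\lambda$ is constant. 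In general the intensity specification $\tfrac{\lambda(t)}{Y}\,\TT$ yields $\tau\stackrel{d}{=}h(Y\tau_0)$ rather than $h(\tau_0)\cdot Y$; these two laws agree only for linear $h$. So the identification $\tau\mid\{Y=v\}\stackrel{d}{=}v\cdot\tau^{(0)}$ with $\tau^{(0)}\sim\mathrm{IPH}(\p,\TT,\lambda)$ is not justified by your time change.

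Your final consistency check, on the other hand, is correct: it shows that the displayed $f$ and $F$ are precisely the density and distribution function of the product $h(\tau_0)\cdot Y$, and this product form is exactly how the theorem is used downstream (via Theorem~\ref{repPHstable}). The clean route is therefore to take \eqref{prodrepform} as the starting point and read off $f$ and $F$ by conditioning on $Y$, as you effectively do in your last paragraph, rather than trying to derive them from the intensity description \eqref{intmat}.
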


Combining Theorem \ref{repPHstable} and Theorem \ref{prodrep}, the matrix Mittag-Leffler random variable $X\sim \mbox{MML}(\alpha,\vect{\pi},\mat{T})$ can hence be interpreted as a particular random scaling of a time-inhomogeneous phase-type distribution $\tau_0\sim PH(\p,\TT)$ with $h(x)=x^{1/\alpha}$ (translating into $\lambda(t)=\alpha t^{\alpha-1}$) and heavy-tailed random scaling factor $Y=S_{\alpha}$. As we will illustrate in Section \ref{secillu}, this represents a particularly versatile yet simple class of random variables for fitting real data. 

\begin{remark}\normalfont
	For any random variable $W$ with cumulative distribution function $F_W$, it is possible to write
	\begin{align*}
	W\stackrel{d}{=}F_W^{-1}(1-\exp(-E))=:h_W(E),
	\end{align*}
	where $E$ is a unit mean exponential random variable. For modeling purposes, the rationale in \cite{ab18inh} can be interpreted as approximating the transformation function $h_W$ before-hand by some function $h$ (in absence of the knowledge of $W$) and then replacing $E$ with a general PH distribution, providing flexibility for the fit with often explicit formulas for the resulting random variable. The matrix-Pareto distributions defined in \cite{ab18inh} are then the special case $Y\equiv 1$ and (up to a constant) $h(x)=e^x-1$ in \eqref{prodrepform}, which entails $\lambda (t)=1/(1+t)$. 
	The fitting in that case is particularly parsimonious for distributions 'close' to a Pareto distribution (where the distance concept here is then inherited from the distance in the PH domain after the log-transform). The general representation \eqref{prodrepform}, in contrast, allows to introduce a potential heavy-tail behavior also through the random scaling factor $Y$, providing more flexibility for the shape of the function $h(x)=x^{1/\alpha}$ (through the choice of $\alpha$) in a fitting procedure while keeping the resulting expressions tractable. 
\end{remark}

\begin{remark}\normalfont
{  The form \eqref{prodrepform} may also suggest to consider -- for modelling purposes -- the somewhat simpler case of a PH variable multiplied by a standard Pareto variable with tail index $\beta>0$, that is $h(x)=x$ and $f_Y(y)=\beta x^{-\beta-1}$, $x\ge 1$. For general $\tau_0\sim PH(\p,\TT)$ it is straightforward to see that then
\begin{align*}
f_\tau(t)&=\beta z^{-\beta-1}\int_0^tw^{\beta}\pi \exp(\mat{T}w)\vect{t} \dd w\\
&=\beta z^{-\beta-1} m_\beta F_{m_\beta}(t),
\end{align*}
where $m_\beta$ is the $\beta$-th moment of $\tau_0$, and $F_{m_\beta}$ is its $\beta$-th moment distribution.

For $\tau_0\sim \text{Exp}(\lambda)$ this simplifies to
$$f_\tau(t)=\frac{\beta (\lambda t)^{-\beta/2}\exp(-\lambda t/2) \mathcal{W}_M\left(\frac \beta 2,\frac \beta 2 +\frac 12,\lambda t\right) }{\lambda t(\beta+1)},$$
where
$$\mathcal{W}_M(k,m,z)=z^{m+1 / 2} e^{-z / 2} \sum_{n=0}^{\infty} \frac{\left(m-k+\frac{1}{2}\right)_{n}}{n !(2 m+1)_{n}} z^{n}$$
 is the Whittaker M function and $(x)_n$ is the Pochhammer symbol. Inserting a matrix into the third argument of this function, one may now proceed again with the matrix version of Cauchy's formula and by Jordan decomposition, potentially giving rise to a theory similar to the one for Mittag-Leffler distributions. However, this direction is not the focus of the present paper. In addition, a key difference between the above product construction and the MML distribution will be discussed in Section \ref{subsm} below in the context of a non-random path representation, for which the fine properties of the ML distribution play a crucial role.} 
\end{remark}

{  \subsection{Semi--Markov framework}\label{subsm}
 Let $E^\ast=\{1,2,...,p\}$ be a state space  and
let $\mat{Q}=\{ q_{ij} \}_{i,j\in E}$ denote a transition matrix for some Markov chain $\{ Y_n \}_{n\in \mathbb{N}}$ defined on $E^\ast$. We assume that $q_{ii}=0$ for all $i$. $\{ Y_n \}_{n\in \mathbb{N}}$ will serve as an embedded Markov chain in a Markov renewal process. 

Let $\alpha \in (0,1]$ and $\lambda_i>0$. For $i=1,...,p,$ let 
$T^i_n$ be i.i.d.\ random variables with a Mittag--Leffler distribution $\mbox{ML}(\alpha,\lambda_i)$, where we use the parametrisation such that the density of a generic $T^i$ is given by
  \begin{equation}
  f_{i}(x)=\lambda_i x^{\alpha-1} {E}_{\alpha,\alpha}(-\lambda_i x^{\alpha}) . \label{def:ML-inter-arrivals}
   \end{equation}
An alternative common parametrisation is obtained in terms of the parameter $\rho_i$ that satisfies  $\rho_i^{-\alpha} = \lambda_i$. 

We now construct a semi--Markov process $\{ X_t \}_{t\geq 0}$ as follows. Let $S_0=0$ and
\[ S_n = \sum_{i=1}^n T^{Y_i}_i , \ \ n\geq 1 .\]
Define
\begin{equation}
  X_t = \sum_{n=1}^\infty Y_{n-1} 1_{\{ S_{n-1}\leq t <S_n  \}}  . \label{def:X-process}
\end{equation}
Then $\{ X_t\}_{t\geq 0}$ changes states according to the Markov chain $Y_n$, $S_n$ denotes the time of the $n$'th jump, and the sojourn times in states $i$ are Mittag--Leffler distributed with paramters $(\alpha,\lambda_i)$. The construction is illustrated in Figure \ref{fig:X-process}.

\begin{figure}[h]
\begin{tikzpicture}[scale=0.75,domain=-1:14]
\draw[->] (0,0)--(14.0,0) node[right] {$t$};
\draw[-] (0,0) --(0,2);
\draw[-,dashed] (0,2)--(0,3);
\draw[->] (0,3)--(0,4) node[above] {$X_t$};
\foreach \y/\ytext in {0.5/1, 1/2, 1.5/3,3.5/p}
\draw[shift={(0,\y)}] (-2pt,0pt) -- (2pt,0pt) node[left] {$\ytext$};

\draw[color=blue,very thick,domain=0:1.90] plot (\x,{1.5});
\draw[color=blue,very thick,domain=2.1:4.4] plot (\x,{0.5});
\draw[color=blue,very thick,domain=4.5:7.9] plot (\x,{3.5});
\draw[color=blue,very thick,domain=8.1:10.9] plot (\x,{0.5});
\draw[color=blue,very thick,domain=11.1:13] plot (\x,{1.0});

\draw[color=blue] (2,1.5) circle (3pt); 
\draw[color=blue,fill] (2,0.5) circle (3pt); 
\draw[color=blue] (4.5,0.5) circle (3pt); 
\draw[color=blue,fill] (4.5,3.5) circle (3pt); 
\draw[color=blue] (8.0,3.5) circle (3pt); 
\draw[color=blue,fill] (8,0.5) circle (3pt); 
\draw[color=blue] (11.0,0.5) circle (3pt); 
\draw[color=blue,fill] (11.0,1.0) circle (3pt);

\draw[thick, snake=brace,segment aspect=0.5] (0,1.7) -- (2,1.7);
\draw[thick] (1.5,2.0) node[above,rotate=0] {$\sim {f_{3}}$};

\draw[thick, snake=brace,segment aspect=0.5] (2,0.7) -- (4.5,0.7);
\draw[thick] (3.75,1.0) node[above,rotate=0] {$\sim {
    f_{1}}$};

\draw[thick, snake=brace,segment aspect=0.5] (4.5,3.7) -- (8,3.7);
\draw[thick] (6.8,4.0) node[above,rotate=0] {$\sim {
    f_{p}}$};

\draw[thick, snake=brace,segment aspect=0.5] (8,0.7) -- (11,0.7);
\draw[thick] (9.9,1.0) node[above,rotate=0] {$\sim {
    f_{1}}$};

\foreach \x/\xtext in {2.0/S_1,4.5/S_2,8/S_3,11/S_4}
\draw[shift={(\x,0)}] (0pt,2pt) -- (0pt,-2pt) node[below] {$\xtext$};
\end{tikzpicture}
\caption{\label{fig:X-process} Construction of a semi--Markov process based on Mittag--Leffler distributed interarrivals.}
\end{figure}
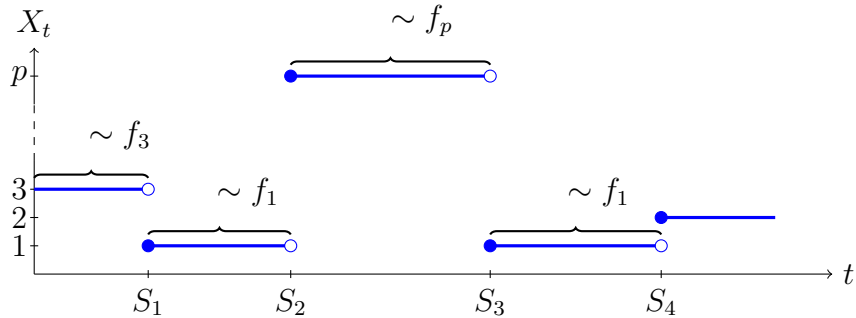

Define the intensity matrix $\mat{\Lambda}=\{ \lambda_{ij}\}_{i=1,...,p}$ by 
\[  \lambda_{ij}=\lambda_i q_{ij}, \ i\neq j, \ \ \mbox{and} \ \ \lambda_{ii}=-\lambda_i = \sum_{k\neq i}\lambda_{ik}  ,  \]
and let 
\[  p_{ij}(t)  = \Prob (X_t=j | X_0=i), \ \ \mat{P}(t) = \{ p_{ij}(t) \}_{i,j=1,...,p} .\]
\begin{theorem}We have
\[   \mat{P}(t) = E_{\alpha,1}(\mat{\Lambda}t^\alpha) .  \] 
\end{theorem}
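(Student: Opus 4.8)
The plan is to derive a Markov renewal equation for the transition functions $p_{ij}(t)$, pass to Laplace transforms, solve the resulting finite linear system, and then recognise the answer as the Laplace transform of $E_{\alpha,1}(\mat{\Lambda}t^\alpha)$ --- the final term-by-term inversion and analytic continuation being handled exactly as in the proof of Theorem~\ref{th:density-MML}.

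First I would record the two scalar transforms needed. By \eqref{ltrafo} and the parametrisation \eqref{def:ML-inter-arrivals}, the interarrival density $f_i$ has Laplace transform $\widehat{f}_i(u)=\lambda_i/(u^\alpha+\lambda_i)$; and since the tail $\bar F_i(t):=1-F_i(t)=E_{\alpha,1}(-\lambda_i t^\alpha)$ satisfies $\bar F_i(t)=1-\int_0^t f_i(s)\dd s$, it has Laplace transform $\widehat{\bar F}_i(u)=(1-\widehat{f}_i(u))/u=u^{\alpha-1}/(u^\alpha+\lambda_i)$ (alternatively one integrates the series for $E_{\alpha,1}$ term by term). With $\mat{D}=\diag(\lambda_1,\dots,\lambda_p)$ this reads $\diag(\widehat{f}_i(u))=(u^\alpha\mat{I}+\mat{D})^{-1}\mat{D}$ and $\diag(\widehat{\bar F}_i(u))=u^{\alpha-1}(u^\alpha\mat{I}+\mat{D})^{-1}$.

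Next I would condition on the time and destination of the first jump out of the initial state $i$, using that $\{X_t\}_{t\ge0}$ regenerates at jump epochs (legitimate because the sojourn variables are i.i.d.\ within each state), to obtain the Markov renewal equation $p_{ij}(t)=1_{\{i=j\}}\bar F_i(t)+\sum_{k=1}^{p}q_{ik}\int_0^t f_i(s)p_{kj}(t-s)\dd s$. Taking Laplace transforms and writing $\widehat{\mat{f}}(u)=\diag(\widehat{f}_i(u))$, $\widehat{\mat{F}}(u)=\diag(\widehat{\bar F}_i(u))$, this becomes $\widehat{\mat{P}}(u)=\widehat{\mat{F}}(u)+\widehat{\mat{f}}(u)\mat{Q}\widehat{\mat{P}}(u)$. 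For $u>0$ the matrix $\widehat{\mat{f}}(u)\mat{Q}$ is strictly substochastic (nonnegative entries, row sums $\widehat{f}_i(u)<1$), hence $\mat{I}-\widehat{\mat{f}}(u)\mat{Q}$ is invertible and $\widehat{\mat{P}}(u)=(\mat{I}-\widehat{\mat{f}}(u)\mat{Q})^{-1}\widehat{\mat{F}}(u)$. Since $q_{ii}=0$ and the rows of $\mat{Q}$ sum to one, $\mat{\Lambda}=\mat{D}\mat{Q}-\mat{D}$, so
\begin{align*}
\mat{I}-\widehat{\mat{f}}(u)\mat{Q}=(u^\alpha\mat{I}+\mat{D})^{-1}(u^\alpha\mat{I}+\mat{D}-\mat{D}\mat{Q})=(u^\alpha\mat{I}+\mat{D})^{-1}(u^\alpha\mat{I}-\mat{\Lambda}),
\end{align*}
and therefore $\widehat{\mat{P}}(u)=(u^\alpha\mat{I}-\mat{\Lambda})^{-1}(u^\alpha\mat{I}+\mat{D})\cdot u^{\alpha-1}(u^\alpha\mat{I}+\mat{D})^{-1}=u^{\alpha-1}(u^\alpha\mat{I}-\mat{\Lambda})^{-1}$. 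For $u$ large enough that the spectral radius of $u^{-\alpha}\mat{\Lambda}$ is below one, inverting term by term gives $u^{\alpha-1}(u^\alpha\mat{I}-\mat{\Lambda})^{-1}=\sum_{n\ge0}\mat{\Lambda}^{n}u^{-\alpha n-1}=\int_0^\infty e^{-ut}E_{\alpha,1}(\mat{\Lambda}t^\alpha)\dd t$, and since $\mat{P}(t)$ and $E_{\alpha,1}(\mat{\Lambda}t^\alpha)$ are bounded and continuous, uniqueness of Laplace transforms together with analytic continuation yields $\mat{P}(t)=E_{\alpha,1}(\mat{\Lambda}t^\alpha)$ for all $t\ge0$.

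I expect the main obstacle to be the rigorous justification of the regeneration/strong-Markov argument underlying the Markov renewal equation (and, for full rigour, the interchange of summation and integration in the last inversion, handled as in Theorem~\ref{th:density-MML}); the matrix algebra itself is entirely routine once the identity $\mat{\Lambda}=\mat{D}\mat{Q}-\mat{D}$ is observed.
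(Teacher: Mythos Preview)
Your proposal is correct and follows essentially the same route as the paper: both condition on the first jump to obtain the Markov renewal equation, pass to Laplace transforms, solve the resulting linear system, and identify the answer as the transform of $E_{\alpha,1}(\mat{\Lambda}t^\alpha)$. Your presentation is a bit more matrix-oriented (via $\mat{\Lambda}=\mat{D}\mat{Q}-\mat{D}$) and more explicit about invertibility and analytic continuation, but the underlying argument is identical.
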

\begin{proof}
Conditioning on the time of the first jump, we get that
\begin{eqnarray*}
 p_{ij}(t)&=&\delta_{ij} \Prob (T^i_1>t) + \int_0^t f_{i}(s)\sum_{k\neq i} q_{ik} p_{kj}(t-s)\dd s \\
 &=& \delta_{ij}E_{\alpha,1}(-\lambda_i t^{\alpha}) + \sum_{k\neq i} q_{ik} \int_0^t 
 \lambda_i s^{\alpha-1}E_{\alpha,\alpha}(-\lambda_i s^{\alpha}) p_{kj}(t-s)\dd s .
 \end{eqnarray*} 
 Taking Laplace transforms, and using that 
 \[  {\mathcal L}\left[x^{\beta-1} E_{\alpha, \beta}\left(a x^{\alpha}\right)\right](s)=s^{-\beta}\left(1-a s^{-\alpha}\right)^{-1} , \]
together with $\lambda_{ik}=\lambda_i q_{ik}$, we get that
\begin{eqnarray*}
\hat{p}_{ij}(s):= {\mathbb E}(e^{-s p_{ij}(t)})&=& \delta_{ij} \frac{1}{1+\lambda_i s^{-\alpha}} + \sum_{k\neq i} \lambda_{ik} 
\frac{s^{-\alpha}}{1+\lambda_i s^{-\alpha}} \cdot \hat{p}_{kj}(s)
\end{eqnarray*}
or
\begin{eqnarray*}
(1+\lambda_i s^{-\alpha})\hat{p}_{ij}(s)&=& \delta_{ij} + s^{-\alpha}\sum_{k\neq i} \lambda_{ik} 
 \hat{p}_{kj}(s) . 
\end{eqnarray*}
Now using $\lambda_{ii}=-\lambda_i$, we get
\begin{equation}
   \hat{p}_{ij}(s) = \delta_{ij} + s^{-\alpha} \sum_{k=1}^p \lambda_{ik}\hat{p}_{kj}(s) .
\label{eq:thesame}
    \end{equation} 
In matrix form this amounts to
\[  \hat{\mat{P}}(s) = \mat{I} + s^{-\alpha} \mat{\Lambda}\hat{\mat{P}}(s) \]
which has the solution
\[  \hat{\mat{P}}(s) = (\mat{I}-s^{-\alpha}\mat{\Lambda})^{-1} .  \]
The right-hand side is the Laplace transform of ${E}_{\alpha,1}(\mat{\Lambda}t^\alpha)$, establishing the result.
\end{proof}
Next we consider the case where $E=\{1,2,...,p,p+1\}$ and where the states $1,...,p$ are transient and state $p+1$ is absorbing (with respect to the Markov chain $\{ Y_n\}_{n\in\mathbb{N}}$). This means that $\{ Y_n\}_{n\in\mathbb{N}}$ has a transition matrix of the form
\[   \mat{Q} = \begin{pmatrix}
\mat{Q}^1 & \vect{q}^1 \\
\vect{0} & 1 
\end{pmatrix} ,  \]
and regarding the intensities we set $\lambda_{p+1}=0$. The matrix $\mat{\Lambda}$ then is of the form 
\begin{equation}
 \mat{\Lambda} 
=
\begin{pmatrix}
\mat{T} & \vect{t} \\
\vect{0} & 0 
\end{pmatrix} . \label{eq:PH-structure-matrix}
\end{equation}
   
We notice the following useful result.   
\begin{lemma}
 \[  E_{\alpha,1}\left(\begin{pmatrix}
\mat{T} & \vect{t} \\
\vect{0} & 0 
\end{pmatrix} x^\alpha \right) = 
\begin{pmatrix}
E_{\alpha,1} (\mat{T}x^{\alpha}) & \vect{e}-E_{\alpha,1} (\mat{T}x^{\alpha})\vect{e} \\
\vect{0} & 1 
\end{pmatrix}  ,
  \]
  where $\vect{t}=-\mat{T}\vect{e}$ (rows sum to zero).
\end{lemma}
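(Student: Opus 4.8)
The plan is to exploit the block lower-triangular structure of the matrix and the fact that the exponent's top-right block $\vect{t}$ sits above a row and column of zeros. Write $\mat{A} = \begin{pmatrix} \mat{T} & \vect{t} \\ \vect{0} & 0 \end{pmatrix}$, so that $\mat{A} x^\alpha$ has exactly the same block form with $\mat{T}$ and $\vect{t}$ replaced by $\mat{T}x^\alpha$ and $\vect{t}x^\alpha$. Since $E_{\alpha,1}(\mat{A}x^\alpha) = \sum_{k=0}^\infty (\mat{A}x^\alpha)^k / \Gamma(\alpha k + 1)$, the whole computation reduces to identifying the powers $\mat{A}^k$ in block form.

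First I would compute $\mat{A}^k$ by induction. Because the bottom row of $\mat{A}$ is zero, one checks immediately that
\[
\mat{A}^k = \begin{pmatrix} \mat{T}^k & \mat{T}^{k-1}\vect{t} \\ \vect{0} & 0 \end{pmatrix}, \qquad k \ge 1,
\]
while $\mat{A}^0 = \mat{I}$. This is the one genuine (though routine) step: the induction follows since $\mat{A}^{k+1} = \mat{A}^k \mat{A}$ multiplies the top-left block $\mat{T}^k$ by $\mat{T}$ and the top-right block picks up $\mat{T}^k \vect{t}$, the bottom row staying zero throughout.

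Next I would assemble the series block by block. The top-left block gives $\sum_{k\ge 0} (\mat{T}x^\alpha)^k/\Gamma(\alpha k+1) = E_{\alpha,1}(\mat{T}x^\alpha)$. The top-right block gives $\sum_{k\ge 1} \mat{T}^{k-1}\vect{t}\, x^{\alpha k}/\Gamma(\alpha k + 1)$; substituting $\vect{t} = -\mat{T}\vect{e}$ turns this into $-\sum_{k\ge 1}(\mat{T}x^\alpha)^k \vect{e}/\Gamma(\alpha k+1) = -\bigl(E_{\alpha,1}(\mat{T}x^\alpha) - \mat{I}\bigr)\vect{e} = \vect{e} - E_{\alpha,1}(\mat{T}x^\alpha)\vect{e}$, using that the $k=0$ term of $E_{\alpha,1}$ is the identity. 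The bottom row contributes $(\vect{0}, 1)$ since only the $k=0$ term survives there. Reassembling the four blocks yields exactly the claimed matrix, so the lemma follows.

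I do not anticipate a real obstacle here; the only thing to be careful about is bookkeeping the index shift in the top-right block (the sum starts at $k=1$, not $k=0$) and correctly peeling off the $k=0$ identity term when rewriting $\sum_{k\ge 1}(\mat{T}x^\alpha)^k\vect{e}/\Gamma(\alpha k+1)$ in terms of $E_{\alpha,1}(\mat{T}x^\alpha)\vect{e}$. One could alternatively invoke the general formula for functions of block-triangular matrices, but the direct power-series computation is cleaner and self-contained given that the relevant series is absolutely convergent ($E_{\alpha,1}$ being entire).
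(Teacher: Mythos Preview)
Your proposal is correct and follows essentially the same approach as the paper: expand $E_{\alpha,1}$ as its power series, identify the block form of $\mat{A}^k$ for $k\ge 1$, and sum each block separately, using $\vect{t}=-\mat{T}\vect{e}$ to simplify the top-right entry. The only cosmetic difference is that the paper substitutes $\vect{t}=-\mat{T}\vect{e}$ immediately (writing the top-right block of $\mat{A}^n$ as $-\mat{T}^n\vect{e}$), whereas you keep it as $\mat{T}^{k-1}\vect{t}$ and substitute afterwards.
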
   
\begin{proof}
By definition,
\begin{eqnarray*}
 E_{\alpha,1}\left(\begin{pmatrix}
\mat{T} & \vect{t} \\
\vect{0} & 0 
\end{pmatrix} x^\alpha \right)&=& 
\sum_{n=0}^\infty \begin{pmatrix}
\mat{T} & \vect{t} \\
\vect{0} & 0 
\end{pmatrix}^n \frac{x^{\alpha n}}{\Gamma (\alpha n+1)} \\
&=&\mat{I} + \sum_{n=1}^\infty \begin{pmatrix}
\mat{T} & \vect{t} \\
\vect{0} & 0 
\end{pmatrix}^n \frac{x^{\alpha n}}{\Gamma (\alpha n+1)} \\
&=&\mat{I} + \sum_{n=1}^\infty \begin{pmatrix}
\mat{T}^n & -\mat{T}^n\vect{e} \\
\vect{0} & 0 
\end{pmatrix} \frac{x^{\alpha n}}{\Gamma (\alpha n+1)} \\
&=& \begin{pmatrix}
\mat{I}+\sum_{n=1}^\infty \mat{T}^n \frac{x^{\alpha n}}{\Gamma (\alpha n +1)} & -\left(\sum_{n=1}^\infty \mat{T}^n  \frac{x^{\alpha n}}{\Gamma (\alpha n +1)}\vect{e}\right) \\
\vect{0} & 1 
\end{pmatrix} \\
&=& \begin{pmatrix}
E_{\alpha,1} (\mat{T}x^{\alpha}) & \vect{e}-E_{\alpha,1} (\mat{T}x^{\alpha})\vect{e} \\
\vect{0} & 1 
\end{pmatrix}.
\end{eqnarray*}
\end{proof}
Thus the restriction of $E_{\alpha,1}(\mat{\Lambda}x^\alpha)$ to the transient states $1,...,p$ equals $E_{\alpha,1}(\mat{T}x^\alpha)$ and is hence the sub--transition matrix between the transient states.
\begin{theorem}\label{Cor:MML-of-sample-path-model}
Let $\{X_t\}_{t\ge 0}$ be a semi-Markov process, where the matrix $\mat{\Lambda}$ has the form
\[ \mat{\Lambda} 
=
\begin{pmatrix}
\mat{T} & \vect{t} \\
\vect{0} & 0 
\end{pmatrix} 
  . \] 
Let $\tau=\inf\{t\ge 0:\, X_t= p+1\}$ denote the time until absorption. 
Then $\tau$ has a MML($\alpha,\vect{\pi},\mat{T}$) distribution, with cumulative distribution function given by
$$
F_\tau(u)=1-\vect{\pi} E_{\alpha,1}(\mat{T} u^\alpha) \vect{e} .
$$
\end{theorem}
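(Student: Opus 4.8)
The plan is to compute the cumulative distribution function of the absorption time $\tau$ directly and recognize it as the MML distribution function from Corollary \ref{cor:cdf-MML}. The key observation is that absorption happens at state $p+1$, so the event $\{\tau \le u\}$ is exactly the event $\{X_u = p+1\}$, since $p+1$ is absorbing for $\{X_t\}$. Hence
\[
F_\tau(u) = \Prob(\tau \le u) = \Prob(X_u = p+1) = \sum_{i=1}^p \pi_i \, p_{i,p+1}(u),
\]
using that $X_0$ is distributed according to $\vect{\pi}$ on the transient states $1,\dots,p$ with no mass on $p+1$.

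Next I would invoke the preceding theorem, which gives $\mat{P}(t) = E_{\alpha,1}(\mat{\Lambda}t^\alpha)$ for the semi-Markov transition probabilities, together with the Lemma that identifies the block structure of $E_{\alpha,1}(\mat{\Lambda}x^\alpha)$ when $\mat{\Lambda}$ has the absorbing form \eqref{eq:PH-structure-matrix}. From the Lemma, the upper-right block of $E_{\alpha,1}(\mat{\Lambda}u^\alpha)$ is precisely $\vect{e} - E_{\alpha,1}(\mat{T}u^\alpha)\vect{e}$, so the vector of absorption probabilities $(p_{i,p+1}(u))_{i=1,\dots,p}$ equals $\vect{e} - E_{\alpha,1}(\mat{T}u^\alpha)\vect{e}$. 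Pre-multiplying by $\vect{\pi}$ and using $\vect{\pi}\vect{e} = 1$ yields
\[
F_\tau(u) = \vect{\pi}\bigl(\vect{e} - E_{\alpha,1}(\mat{T}u^\alpha)\vect{e}\bigr) = 1 - \vect{\pi}E_{\alpha,1}(\mat{T}u^\alpha)\vect{e},
\]
which is exactly the distribution function appearing in Corollary \ref{cor:cdf-MML}. Since a probability distribution on $(0,\infty)$ is determined by its distribution function, this identifies the law of $\tau$ as $\mbox{MML}(\alpha,\vect{\pi},\mat{T})$, completing the argument.

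There is essentially no serious obstacle here: the heavy lifting has already been done in the theorem computing $\mat{P}(t)$ (via the Laplace-transform/renewal-equation argument) and in the block-structure Lemma. The only point requiring a word of care is the justification that $\{\tau \le u\} = \{X_u = p+1\}$, i.e. that once the semi-Markov process reaches $p+1$ it stays there — this follows because $q_{p+1,j} = 0$ for $j \ne p+1$ (state $p+1$ is absorbing for the embedded chain $\{Y_n\}$) and $\lambda_{p+1} = 0$, so no further jumps occur. One should also note that the conditioning identity for $p_{i,p+1}$ and the interchange of $\vect{\pi}$ with the matrix series are the same manipulations used in the previous proof, so no new analytic subtleties arise.
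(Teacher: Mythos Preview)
Your proposal is correct and follows essentially the same approach as the paper: both arguments invoke the preceding theorem $\mat{P}(t)=E_{\alpha,1}(\mat{\Lambda}t^\alpha)$ together with the block-structure Lemma, and then read off the distribution function of $\tau$. The only cosmetic difference is that the paper computes the survival probability $\Prob(\tau>u)=\Prob(X_u\in\{1,\dots,p\})$ via the upper-left block $E_{\alpha,1}(\mat{T}u^\alpha)$, whereas you compute $\Prob(\tau\le u)=\Prob(X_u=p+1)$ via the upper-right block $\vect{e}-E_{\alpha,1}(\mat{T}u^\alpha)\vect{e}$; these are of course equivalent.
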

\begin{proof}
For \(E^\ast=\{1,2, \ldots, p\}\), the events \(\{\tau>u\}\) and \(\left\{X_{u} \in E^\ast\right\}\) coincide. Thus,
\begin{align*}
1-F_\tau(u)&=\mathbb{P}(\tau>u)=\Prob\left(X_{u} \in E^\ast\right)=\sum_{j=1}^{p} \Prob\left(X_{u}=j\right)\\
&=\sum_{i, j=1}^{p}\Prob\left(X_{u}=j | X_{0}=i\right) \Prob\left(X_{0}=i\right)
=\sum_{i, j=1}^{p} \pi_{i} \boldsymbol{P}_{i j}(u)=\vect{\pi} E_{\alpha,1}(\mat{T} u^\alpha) \vect{e}.
\end{align*}

\end{proof}

{\begin{remark} \normalfont
The proofs above heavily depend on the form of the Laplace transform of the ML distribution, and its similarity with the exponential function. Note that the above construction naturally extends the definition of PH distributions as absorption times of continuous--time Markov chains, the latter being the limit case $\alpha\to 1$. In general, such a semi-Markov representation will hence not be available for other product distributions.
\end{remark}}
}

\section{Statistical modeling using MML distributions}\label{secillu}
In this section we present some examples of MML distribution fitting to data. Let us start with an illustration of the maximum likelihood  fitting performance to simulated data. 
\begin{example}\normalfont
We simulate  $300$ observations from $X\sim\mbox{MML}(\alpha,\vect{\pi},\mat{T})$, with a mixture of Erlang PH component, with parameters chosen in such a way that the log-data is trimodal. The corresponding maximum likelihood fit is depicted in Figure \ref{simfit} (for visualization purposes the scale of the $x$-axis is logarithmic). The true parameters are $m=3$, $\alpha=0.9$, $p_1=p_2=p_3=3$, $\lambda_1=10,\:\lambda_2=1,\:\lambda_3=0.1$,  $\theta_1=0.3,\:\theta_2=0.3\:\theta_3=0.4$, whereas the maximum likelihood estimator is found to be
\begin{align*}
\hat\alpha=0.905532 ,\quad \hat \theta_1=0.3133867 ,\quad \hat \theta_2=0.3138739 ,\quad \hat \theta_3=0.3727393,\\
\quad \hat\lambda_1= 9.193643,\quad \hat\lambda_2=1.137208 ,\quad \hat\lambda_3=0.08746225. 
\end{align*}
The negative log-likelihood at the fitted parameters was $1020.102$, compared to $1023.963$ at the true parameters (i.e., in the likelihood sense, the fitted model even outperforms the true model for the simulated data points). Note that the tri-modal shape of the underlying density is nicely identified here (which in pure PH fitting would typically not work as smoothly). We also provide a Hill plot of the simulated data, where the potentially heavy-tailed behavior can be clearly appreciated.\hfill $\Box$

\begin{figure}[hh]
\centering
\includegraphics[width=7.3cm,trim=.5cm .5cm .5cm .5cm,clip]{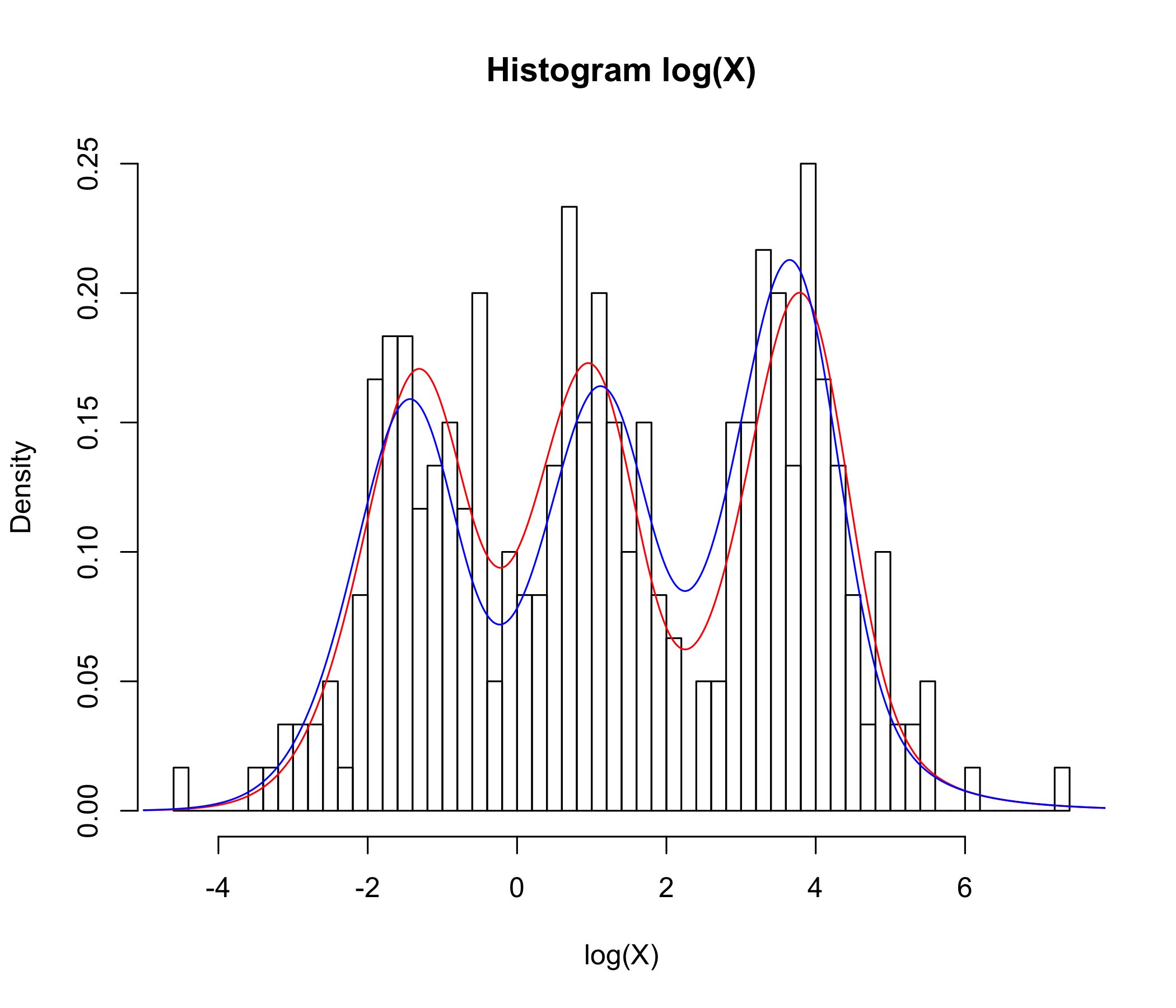}
\includegraphics[width=7.3cm,trim=.5cm .5cm .5cm .5cm,clip]{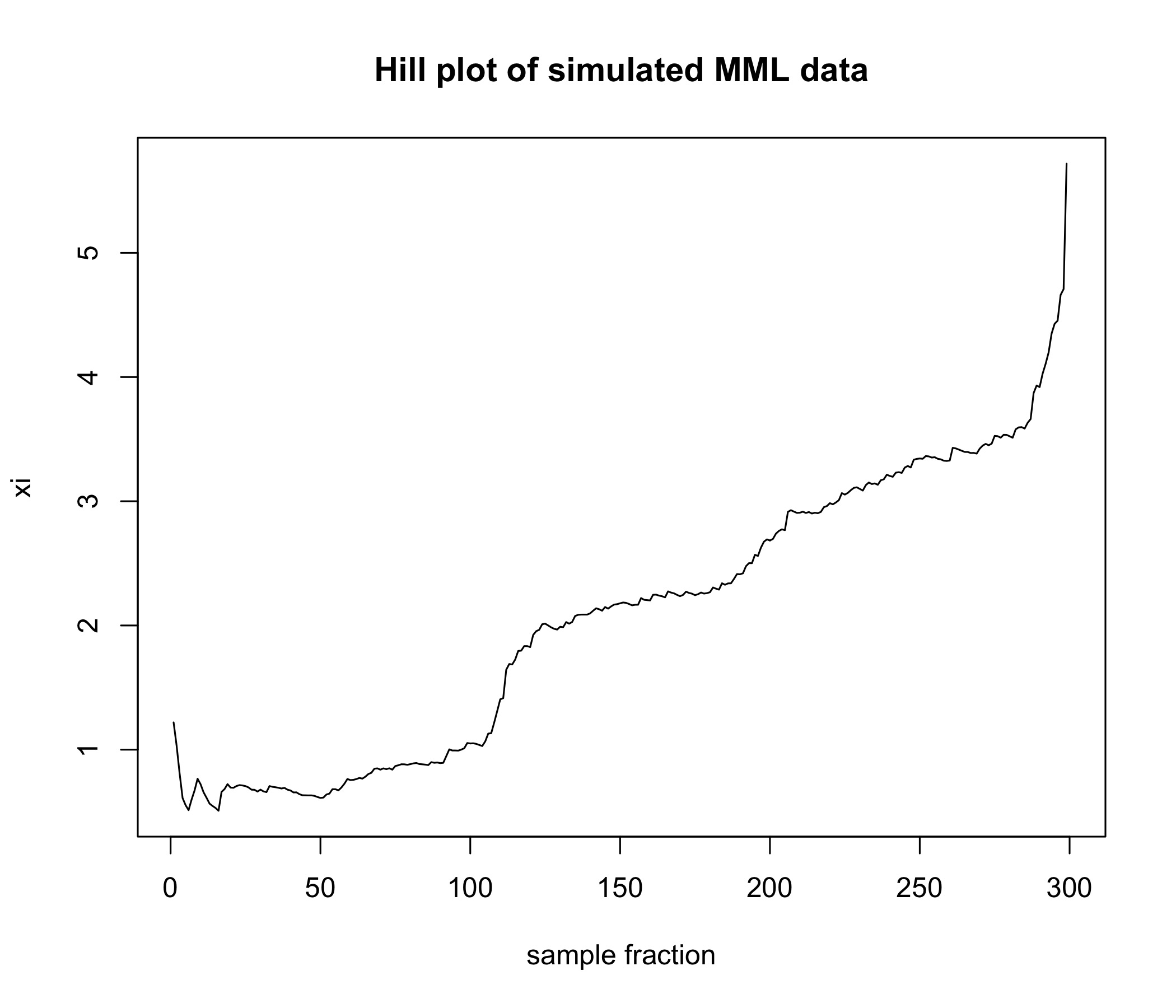}
\caption{Left panel: maximum likelihood fit (red) to simulated MML data with mixture of Erlang PH component and parameters $m=3$, $\alpha=0.9$, $p_1=p_2=p_3=3$, $\lambda_1=10,\:\lambda_2=1,\:\lambda_3=0.1$,  $\theta_1=0.3,\:\theta_2=0.3,\:\theta_3=0.4$. The true density is plotted in blue. Right panel: Hill plot of the untransformed data.} 
\label{simfit}
\end{figure}

\end{example}

The fact that MML distributions behave in a Pareto manner with parameter $\alpha\in (0,1]$ in the tail can be seen from \eqref{prodrepresentation}. Since such a tail will be too heavy for most applications, we introduce a simple power transformation to gain flexibility for the tail behavior, which particularly allows lighter tails as well. 

\begin{definition}
	Let $X\sim \mbox{MML}(\alpha,\vect{\pi},\mat{T})$. For $\nu>0$, we define 
	\[ X^{1/\nu}\sim \mbox{PMML}(\alpha,\vect{\pi},\mat{T},\nu), \]
	and refer to it as the class of Power-MML (PMML) distributions. 
	%
\end{definition}
The density function of a $\mbox{PMML}(\alpha,\vect{\pi},\mat{T},\nu)$ distribution is given by 
\[  f(x)=\nu x^{\nu\alpha-1}\vect{\pi}E_{\alpha,\alpha}\left(\mat{T}x^{\nu\alpha} \right)\vect{t}, \]
which will be needed for the maximum likelihood procedure below. 

{ 
\begin{remark}\normalfont 
The introduction of the PMML class allows for an adaptive transformation of the data during the fitting procedure. The interpretation of $\nu$ is then as the power to which the data should be taken in order for the latter to be most adequately fit by a pure MML distribution. As the number of MML components grows, the product $\alpha \nu$ is expected to estimate the tail index. However, this estimate might be far off when the matrix $\mat{T}$ is not large enough in order for the global fit to be adequate. In those cases, the power transform will tend to improve the fit of the body of the distribution, rather than the tail. When compared to the approach taken in \cite{ab18inh} (fitting a PH density to log-transformed heavy tailed variables) one  can consider the present procedure as adaptive selection of the transformation function, as opposed to fixing it to be the logarithm.
\end{remark}
}
\begin{example}\normalfont
We consider a real-life motor third party liability (MTPL) insurance data set which was thoroughly studied in \cite{abt}, mainly from a heavy-tailed perspective (referred to as "Company A" there). The data set originally consists of $837$ observations, having the interpretation of claim sizes reported to the company during the time frame 1995-2010. The data are right-censored, and were analyzed recently in \cite{abb} using perturbed likelihood with censoring techniques. For the present purpose, we solely focus on the \textit{ultimates}, which consists of imputing an expert prediction of the final claim amount for all claims which are still open, i.e.\ right-censored. We restrict our analysis here to the largest $800$ observations, since the inclusion of the 37 smallest claims lead to a sub-optimal fit, but are somehow irrelevant for modeling purposes. 
For convenience, we divided the claim sizes by $100,000$.

The heavy-tailed nature of the data suggests that using MML distributions to model the claim sizes is appropriate. Recently, in \cite{abbtrim}, a tail index of $\alpha^{-1}=0.48$ was suggested through an automated threshold selection procedure, using a novel trimming approach for the Hill estimator. Since this (or also other much rougher pre-analysis techniques like Pareto QQ-plots) suggests a finite mean, we employ the PMML distributions for the present purpose. This is in fact advised as a general procedure, since in situations where a pure MML  fit is appropriate, the fitting procedure will suggest a value for $\nu$ close to 1 anyway. The maximum likelihood procedure identifies here a surprisingly simple PMML distribution as adequate, namely with a PH component just being a simple exponential random variable:
\begin{align*}
\hat\alpha=0.3025553,\quad \hat{\vect{T}}=-0.08293046,\quad \hat\nu=6.941576.
\end{align*}
More complex PH components turn out to indeed numerically degenerate into this simple model again. The resulting model density is hence given by 
\[ f(x)=0.56\, x^{1.1}E_{0.30,0.30}\left(-0.08 x^{2.1} \right).\]
 The adequate fit can be appreciated in Figure \ref{ultimates}. Observe that the maximum likelihood approach is concerned not only with the tail behaviour but also with adequately fitting the body of the distribution. Nonetheless, the tail index of the PMML fit is given by $(\hat\alpha\cdot\hat\nu)^{-1}=0.4761427$, which is strikingly(!) close to the $0.48$ suggested in \cite{abbtrim}. 

\noindent A previous approach to describe the entire data set by one model was given in \cite[p.99]{abt}, where a splicing point was suggested for this data set at around the $20$th largest order statistic, based purely on expert opinion. A semi-automated approach in \cite{abbtrim} suggested splicing at the $14$th largest data point. Notice that not only does our model fit the data well and is much more parsimonious, but it also circumvents threshold or splicing point selection completely. \hfill $\Box$

\begin{figure}[]
\centering
\includegraphics[width=7.3cm,trim=.5cm .5cm .5cm .5cm,clip]{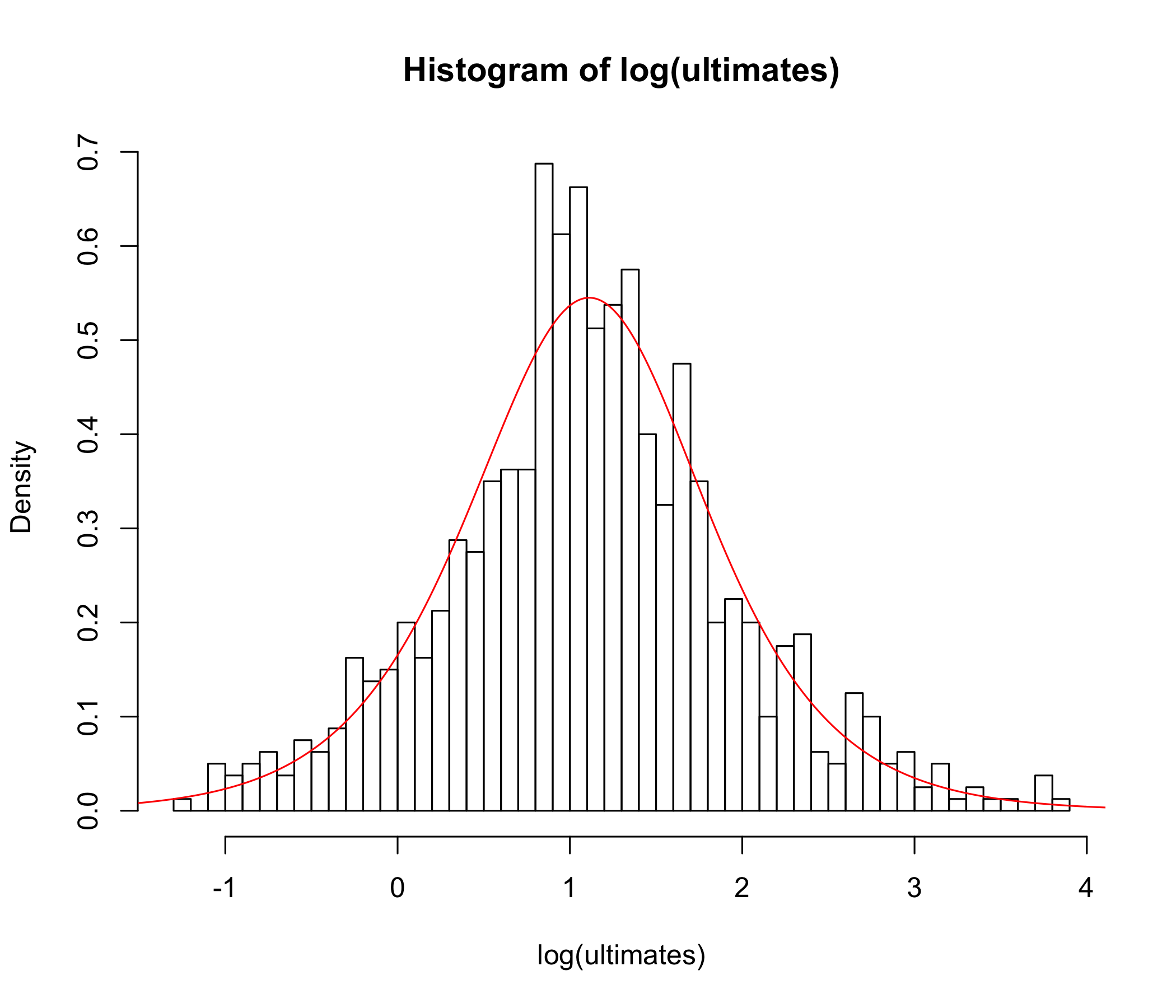}
\includegraphics[width=7.3cm,trim=.5cm .5cm .5cm .5cm,clip]{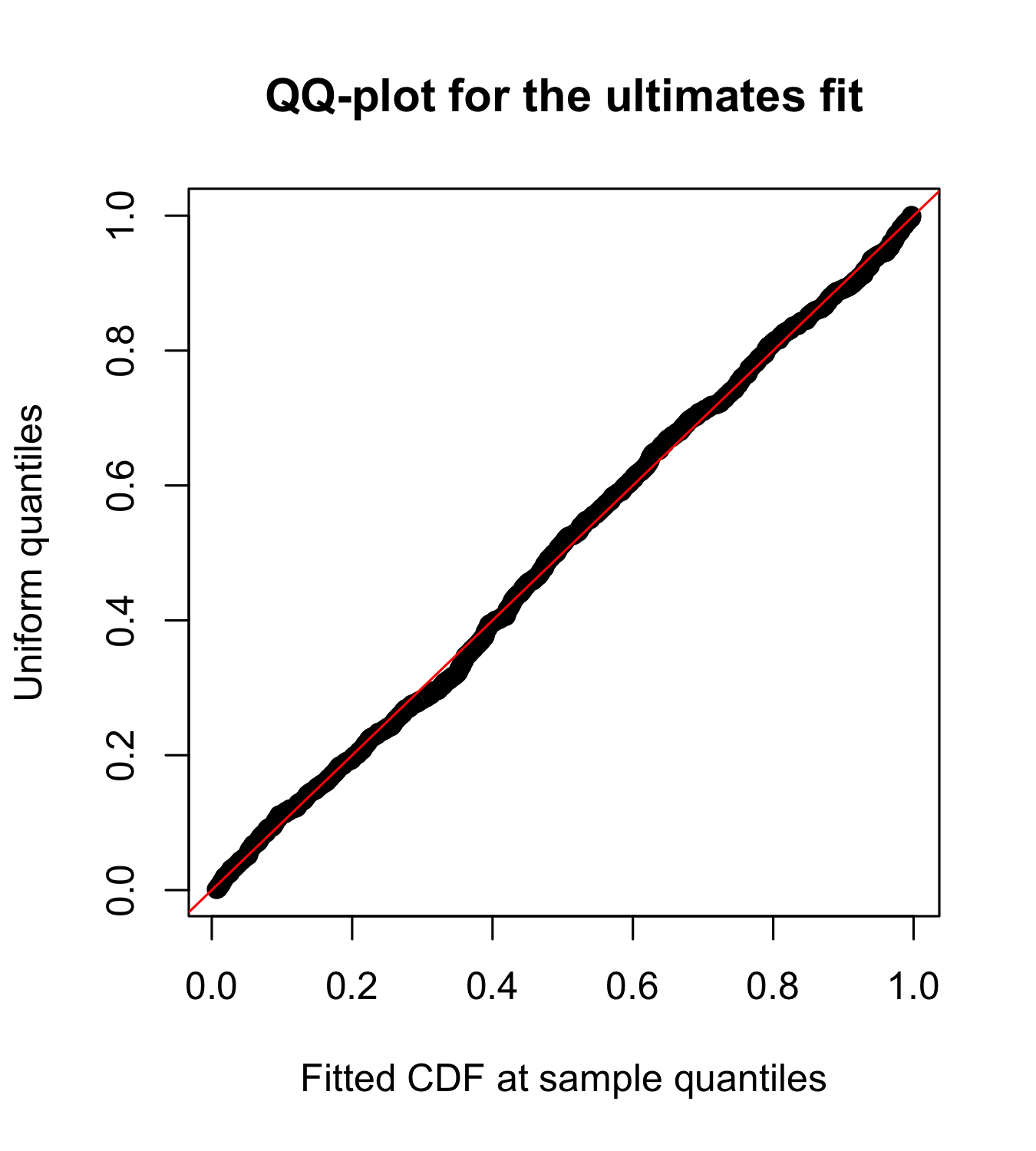}
\caption{Liability insurance ultimates. Left panel: maximum likelihood fit using a PMML with exponential PH component. Right panel: QQ-plot of the fitted distribution function evaluated at the sample quantiles, against theoretical uniform quantiles.} 
\label{ultimates}
\end{figure}

\end{example}

Phase-type distributions are weakly dense in the set of all probability distributions on the positive real line. However, often a very large dimension of the PH distribution is needed to get a decent fit to data. Here, we show an example of how the class of PMML distributions can be used to reduce the dimension of a PH fit, thanks to the increased flexibility that the randomization with an $\alpha$-stable distribution and the power function $(\cdot)^{1/\nu}$ provide.
\begin{example}\normalfont
We consider $n=500$ simulated data points $X_1,\dots,X_{n}$ following a mixture of two Erlang(40) distributions. The PH representation is of dimension $80$, with parameters
\begin{align*}
\pi^0_1=\pi^0_{41}=0.5, \quad \lambda^0_1=100,\quad \lambda^0_2=50.
\end{align*}
Since the data is light-tailed, and PMML are heavy-tailed, we consider the transformed observations
\begin{align}
Y_i=\exp(X_i)-1, \quad i=1,\dots,n,\label{exptrafo}
\end{align}
which are Pareto in the tail. We then proceed to fit a PMML distribution to the transformed data, but with a much lower matrix dimension. Concretely, we consider a mixture of two Erlang distributions of three phases each for the PH component of the PMML representation. In this way we are led to the maximum likelihood estimates
\begin{align*}
\hat\alpha=0.8649503,\quad \hat\pi_1=0.5386982,\quad \hat\pi_1=0.4613018,\\
\quad \hat\lambda_1=25.47413,\quad \hat\lambda_2=1.298168,\quad \hat\nu=3.871273.
\end{align*}
The (back-transformed) fitted density is plotted in Figure \ref{expphfit}, along with a histogram of the original PH data points. We also include a fitted density using a pure PH distribution of the same dimension and kind: a mixture of two Erlangs of three phases each. We observe how transforming the data into the heavy-tail domain, fitting a PMML and then back-transforming adds only two extra parameters and improves the estimation dramatically. Additionally, a Hill plot of the transformed data is provided, which shows that the tail index empirically could correspond to $\alpha^{-1}\approx 0.1$, such that it is necessary to use the PMML class, as opposed to only the MML. For reference, the resulting tail is $(\hat\alpha\cdot\hat\nu)^{-1}=0.223427$, but here the quantification of the tail behaviour is not the main focus of the estimation, and used only qualitatively. {  In fact, a quick calculation shows that the true index is $\alpha^{-1}=1/50=0.02$, and it is well-known that it is a hard task to estimate tail indices in the transition area between Fr\'echet and Gumbel domains of attraction.}  The additional Hill plots of simulated paths of the estimated model in Figure \ref{expphfit} show how the hump at the middle of the Hill plot is not a random fluctuation, but rather a systematic feature.

{  Notice that we took the exponential transformation \eqref{exptrafo} of the simulated data because in this case we know that their tail is exponential and hence the transforms will be regularly varying, in accordance with the PMML distribution. In general, the exponentiation of light-tailed data does not imply that the resulting underlying distribution is regularly varying in the tail. Hence, for real data, a preliminary assessment of the tail behavior and the one of their exponential transforms is recommended. }

\begin{figure}[hh]
\centering
\includegraphics[width=7.3cm,trim=.5cm .5cm .5cm .5cm,clip]{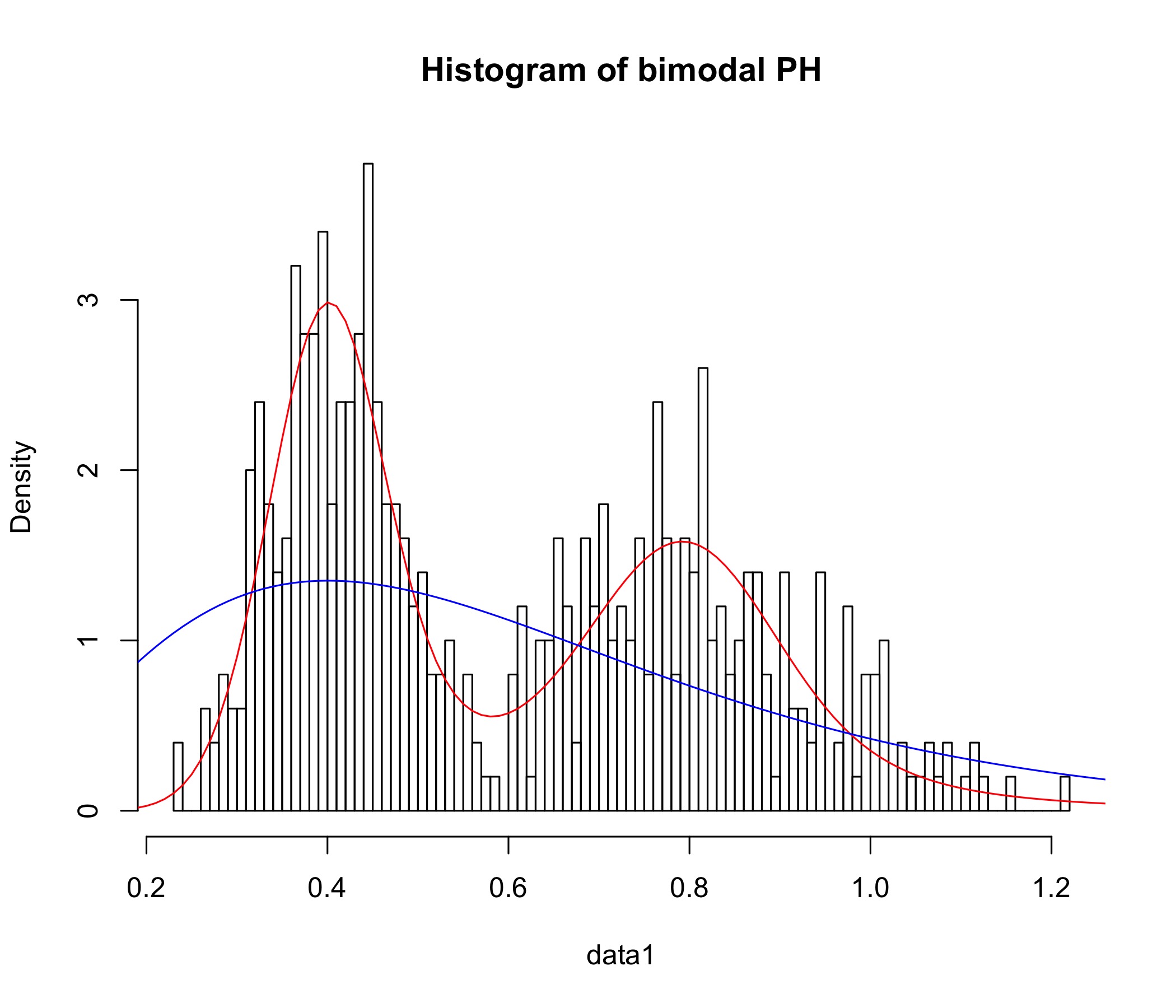}
\includegraphics[width=7.3cm,trim=.5cm .5cm .5cm .5cm,clip]{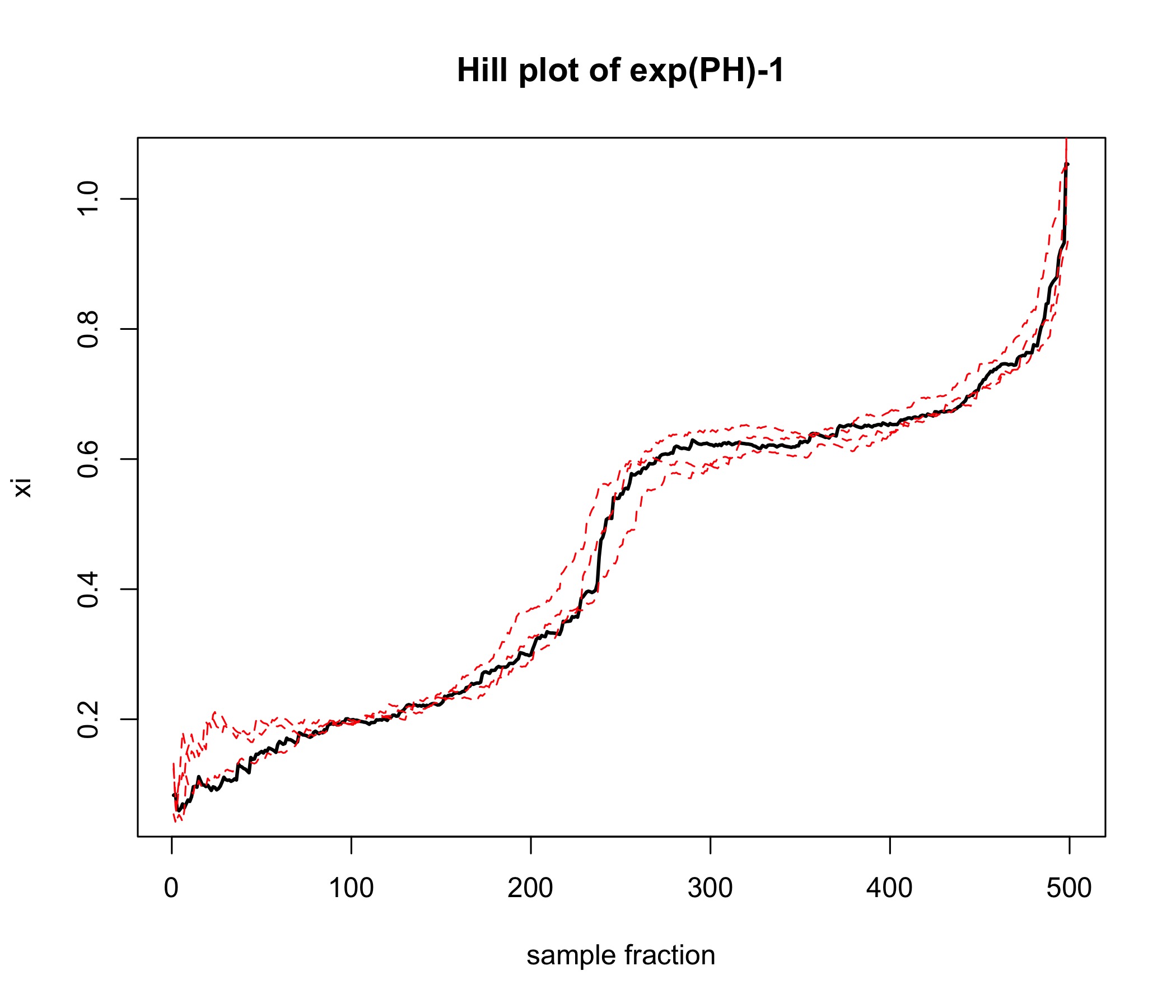}
\caption{Left panel: a back-transformed PMML fit (red) using a $6$-dimensional matrix representation, and a pure PH fit (blue) of the same dimension, to an $80$-dimensional PH simulated data set. Right panel: Hill plot of the transformed data (black, solid), together with Hill plots for simulated data from the resulting estimated model (red, dashed).} 
\label{expphfit}
\end{figure}

\end{example}

{ 
\begin{remark}\rm
When fitting a MML or PMML distribution to data, one has to decide upon the dimension of the underlying phase--type representation. This problem arises similarly when fitting phase--type distributions to light-tailed data, and there are no generally accepted and well established methods for model selection, since PH distributions may be well overparametrised so that penalized methods such AIC or BIC indices will not work in general. 
The order of the PMML (or phase--type distribution) is therefore usually chosen by fitting a range of models of different dimension and then comparing the fit and likelihood values (which, as opposed to information indices, are comparable). 
\end{remark}}
\section{Conclusion}\label{secconcl}
In this paper we define the class of matrix Mittag-Leffler distributions and derive some of its properties. We identify this class as a particular case of inhomogeneous phase-type distributions under random scaling with a stable law, which together with its power transforms is surprisingly versatile for modeling purposes. {  In addition, the class is shown to correspond to absorption times of semi--Markov processes with Mittag--Leffler distributed interarrival times, providing a natural extension of the phase--type construction.} We illustrate with several examples  that this class can simultaneously fit the main body and the tail of a distribution with remarkable accuracy in a parsimonious manner. It turns out that the flexibility of this heavy-tailed class of distributions can even make it worthwhile to transform data into the heavy-tailed domain, fitting the resulting data points and then transforming them back. It will be an interesting direction for future research to further explore the potential of such fitting procedures, both from a theoretical and practical perspective. \\

\textbf{Acknowledgement.} H.A. acknowledges financial support from the Swiss National Science Foundation Project 200021\_168993.

\bibliographystyle{plain}
\bibliography{HMM_rev.bib}

\end{document}